\title{Approximate multiplication of nearly sparse matrices with decay in a fully recursive distributed task-based parallel framework%
\thanks{\today
}}
\author{Anton G. Artemov%
\thanks{Division of Scientific Computing, Department of Information Technology, Uppsala University, Box 337, SE-751 05 Uppsala, Sweden (\email{anton.artemov@it.uu.se}).}
}
\begin{document}
\maketitle

\begin{abstract}
In this paper we consider parallel implementations of approximate multiplication of large matrices with exponential decay of elements. Such matrices arise in computations related to electronic structure calculations and some other fields of computational science. Commonly, sparsity is introduced by dropping out small entries (truncation) of input matrices. Another approach, the sparse approximate multiplication algorithm [M. Challacombe and N. Bock, arXiv preprint 1011.3534, 2010] performs truncation of sub-matrix products. We consider these two methods and their combination, i.e. truncation of both input matrices and sub-matrix products. Implementations done using the Chunks and Tasks programming model and library [E. H. Rubensson and E. Rudberg, Parallel Comput., 40:328–343, 2014] are presented and discussed. We show that the absolute error in the Frobenius norm behaves as $O\left(n^{1/2} \right), n \longrightarrow \infty $ and $O\left(\tau^{p/2} \right), \tau \longrightarrow 0,\,\, \forall p < 2$ for all three methods, where $n$ is the matrix size and $\tau$ is the truncation threshold.  We compare the  methods on a model problem and show that the combined method outperforms the original two. The methods are also applied to matrices coming from large chemical systems with $\sim 10^6$ atoms. We show that the combination of the two methods achieves better weak scaling by reducing the amount of communication by a factor of $\approx 2$.
\end{abstract}

\section{Introduction} \label{intro}

Although dense matrix-matrix multiplication is a well\Hyphdash studied procedure, its cubic complexity makes it very resource\Hyphdash demanding or even infeasible to use for large matrices. Much effort has been invested in finding better algorithms with reduced complexities, see for instance \cite{strassen1969gaussian}, \cite{coppersmith1987matrix}. When it comes to implementation of dense matrix multiplication, the way matrices are stored also affects the performance \cite{bader2006cache}.

For sparse matrices, there are many possible formats and it might be difficult to choose the one which provides the best performance. As a rule, in sparse matrix algorithms, matrices are stored in special formats, such as compressed sparse rows (CSR) or compressed sparse columns (CSC) \cite{saad2003iterative}. A drawback of these formats is that once one of them is chosen, operations with matrix transpose become difficult to perform in parallel. Using the idea of blocking, \citet{bulucc2009parallel} suggested the compressed sparse blocks (CSB) format, which solves this problem, at least partially. Another application of blocking technique is the blocked CSR format (BCSR), which works better for matrices with zeros occurring in blocks \cite{im2001optimizing}. All these storage formats and sparse matrix algorithms are designed for common-sense sparse matrices, i.e. such that have a few non-zero elements per row. 

The problem of our interest falls somewhere in between dense and sparse matrix multiplication. In electronic structure calculations based on the Hartree--Fock method or Kohn--Sham density functional theory, a key component is the computation of a density matrix $D$ for a given Hamiltonian $F$. A popular method is to perform a polynomial approximation of the function $D = \theta(\mu I - F),$ where $\theta$ is the Heaviside step function and $\mu$ is the chemical potential. This can be done in different ways. Two commonly used approaches are recursive application of low-order polynomials (density matrix purification, \cite{niklasson2002expansion, holas2001transforms}) and construction of Chebyshev polynomials \cite{goedecker1994efficient, baer1997sparsity, mohr2017efficient}. Depending on the implementation, the core operation is often matrix-matrix multiplication, and it is the crucial component for overall performance and scaling of the code. 

Matrices in electronic structure calculations have an important property of decay of elements. A matrix $A$ is said to obey an exponential decay with constants $c > 0,$ $\alpha > 0$ and $\lambda > 1$ with respect to $d(i,j)$ if $|a_{i,j}| \leq c \lambda^{-\alpha d(i,j)}$ or an algebraic decay with constants $c > 0$ and $\lambda > 1$ if $|a_{i,j}| \leq c \left(d(i,j)^\lambda + 1 \right)^{-1}$, where $d(i,j)$ is a distance function defined on the index set of the matrix. The distance function corresponds to the physical distance between some a priori chosen basis functions. It is convenient to look at such chemical system as a spatial graph, where the distance function is defined on the edges. Under certain assumptions, small matrix elements can be ignored, as usually done to matrices arising in linearly scaling electronic structure calculations \cite{li1993density, challacombe2000general, rubensson2011methods, bowler2010calculations}. In this context, linear scaling means that the computational time is directly proportional to the size of the considered chemical system. Such matrices are not sparse in the common sense, since they may have from several non-zero elements to several thousands non-zero elements per row, depending on the type of basis functions used, and thus standard sparse matrix algorithms may become inefficient here. However, the special structure of matrices with decay allows to achieve linear scaling, hence the name for the group of methods. Ideally, with linearly increasing computational power, it should give constant computational time. This is difficult to achieve due to communication costs, which start to play a significant role at some point.

\paragraph{Related work} \label{related_work}

The number of non-zero elements per row and decay properties dictate also that the compressed storage formats like CSR or CSC are unlikely to be usable in this context, because their strong point, i.e. compression, does not work well enough in this case. One alternative is introduced by \citet{mohr2017efficient}, where they use the so-called segment storage format (SSF). This format is based on the idea of grouping together consecutive non-zero entries in segments. Another approach is to exploit the hierarchical structure of a matrix, i.e. treat it as a matrix of matrices \cite{rubensson2007hierarchic}. Similar ideas are used in \cite{bock2013optimized, rubensson2016locality}. This representation is natural not only for the matrices which arise in electronic structure calculations, but in a wider class of problems, including, but not limited to, integral equations, partial differential equations, matrix equations and many more, see \cite{bebendorf2008hierarchical, hackbusch2015hierarchical} for extensive study. However, the notion of a hierarchical matrix used by \citet{hackbusch2015hierarchical} is different, since it is a data-sparse approximation of matrices, which are not necessarily sparse by construction.

Multiplication of matrices, which have more non-zero elements per row, than standard sparse matrices, or, in other words, nearly sparse matrices, is a challenging problem, especially since in general the sparsity pattern is not known in advance. In this case, load balancing becomes a substantial difficulty.

One of the first parallel implementations of nearly-sparse matrix multiplication is done by \citet{challacombe2000general}. The author uses conventional MPI and a special data format with distributed blocked compressed sparse rows. The load balancing and locality issues are addressed by introducing a space filling curve, which is a heuristic solution for the travelling salesman problem. The efficiency is demonstrate up to 128 cores.

An alternative approach is suggested by \citet{bulucc2012parallel}. They address load balancing issues by randomly permuting rows and columns, which leads to high efficiencies in the general case. This approach is successfully applied in electronic structure calculations \cite{borvstnik2014sparse,vandevondele2012linear}. In \cite{vandevondele2012linear} for instance, a scalability to $\sim 10^6$ atoms on 46656 cores is reported.

\citet{dawson2018massively} employ the communication-avoiding 3D multiplication algorithm by \citet{ballard2013communication} in their NTPoly code. The load balancing is addressed in the same manner using permutations as in \cite{bulucc2012parallel}. 

\citet{rubensson2016locality} present a way of multiplying sparse matrices in parallel, preserving an important property of locality. The approach is based on a hierarchical representation of matrices and a task-based parallel programming model called Chunks and Tasks \cite{CHT-PARCO-2014}. The load balancing issue is addressed by a scheduler, which utilizes a work stealing concept. A more detailed description of the model is presented in Section \ref{cht_model}.

Commonly, in electronic structure calculations, sparsity is maintained by truncation of small elements before and sometimes after multiplication. Different strategies of choosing those elements exist, see for example the discussions in \cite{goedecker1999linear, rubensson2009truncation}. Unlike the majority of methods, the algorithm considered in this article performs truncation of sub-matrix products to reduce the computational complexity.

\paragraph{Outline and structure}

In this article we consider different aspects of possible parallel implementations of multiplication of truncated matrices, sparse approximate matrix multiplication ($\textit{SpAMM}$) and their combination in a fully recursive task-based parallel environment called Chunks and Tasks \citep{CHT-PARCO-2014}. The main contributions of the article are the novel asymptotic absolute error analysis (Section \ref{error_control} and Appendix \ref{appendix_with_proofs}) and the combined technique, which has reduced communication, but preserves the accuracy (asymptotically) of the two original methods, and the implementation of all three methods.

The article is organized as follows. Section \ref{the_algorithm} contains a brief description of the sparse approximate matrix multiplication algorithm and previous implementations. Section \ref{error_control} contains derivations related to the error control. In Section \ref{inplementation_cht}, we describe our implementations within the Chunks and Tasks programming model, briefly discuss the programming model itself, and the leaf level library. Section \ref{results} contains a description of the experimental set-up, benchmarks and their results discussion. Some conclusions are given in Section \ref{conclusion}.

\section{The $\textit{SpAMM}$ algorithm} \label{the_algorithm}

The sparse approximate matrix multiplication algorithm introduced in \cite{challacombe2010fast, bock2013optimized} belongs to a family of $n$-body solvers, which use hierarchical approximations of sub-problems to reduce complexity, similarly to the famous fast multipole method by \citet{greengard1987fast}. The $\textit{SpAMM}$ algorithm exploits a hierarchical representation of the matrix and the decay property, which in this case is preserved for all levels in the hierarchy, to skip multiplication of small sub-matrices. The decay property is preserved from level to level because the distance function does not change. Assume that the matrices are represented as quad-trees such that

\begin{equation} \nonumber
A^t = \begin{pmatrix}
A^{t+1}_{0,0} & A^{t+1}_{0,1} \\ A^{t+1}_{1,0} & A^{t+1}_{1,1}
\end{pmatrix}, 
\end{equation} where $t$ is the level in the hierarchy, $t \in [1, \lceil\log_2(n)\rceil ],~ n$ is the matrix size. Level $\lceil \log_2(n) \rceil + 1$ contains single matrix elements. With 

\begin{equation} \nonumber
\| A^t \|_F = \sqrt{\sum\limits_{i=0}^{1}\sum\limits_{j=0}^{1}\| A^{t+1}_{i,j} \|^2_F},
\end{equation} the $\textit{SpAMM}$ algorithm can be written recursively as shown in Algorithm \ref{SPAMM_pseudocode}. Along with each matrix we keep its squared Frobenius norm and, since it is additive, it can be conveniently computed recursively starting from the lowest level. The parameter $\tau$ determines truncation of sub-matrix products, i.e. in the product space, whereas commonly truncation is applied on the input matrices, i.e. in the vector space. (see for example \cite{rubensson2011bringing}). 

\subsection{Serial implementations of $\textit{SpAMM}$}

Following the naive serial recursive implementation in \cite{challacombe2010fast}, a highly optimized serial version is implemented by the same authors in \citep{bock2013optimized}. It uses an optimized non-recursive kernel of $\textit{SpAMM},$ which can be introduced at any level of recursion, symbolic multiplication with linkless trees and SSE intrinsics. 

\begin{algorithm}[t]
\begin{algorithmic}[1]
\REQUIRE $A$, $B$, $\tau$
\ENSURE $C$
\IF{lowest level}
	\STATE \textbf{return} $C = A B$ \label{spamm_algline2}
\ENDIF
\FOR{i = 0 \TO 1 } 
	\FOR{j = 0 \TO 1 } 
		\IF{$\| A_{i,0} \|_F \| B_{0,j} \|_F \geq \tau $} \STATE { $T_0 =  SpAMM(A_{i,0}, B_{0,j},\tau)$ } \ELSE \STATE{$T_0 = 0$} \ENDIF
		\IF{$\| A_{i,1} \|_F \| B_{1,j} \|_F \geq \tau $} \STATE { $T_1 =  SpAMM(A_{i,1}, B_{1,j},\tau)$ } \ELSE \STATE{$T_1 = 0$} \ENDIF
		\STATE $C_{i,j} = T_0 + T_1$ 
	\ENDFOR	
\ENDFOR
\RETURN $C$
\end{algorithmic}
\caption{SpAMM}
\label{SPAMM_pseudocode}
\end{algorithm}

\subsection{Parallel implementations of $\textit{SpAMM}$}

\citet{bock2016solvers} present two parallel implementations of the $\textit{SpAMM}$ algorithm. The first one, which uses the OpenMP API, exploits parallel quad-tree traversal using untied \verb|task|,  i.e. a task which can be resumed by any thread (not necessarily by the one which started it) after suspension. The implementation demonstrates good parallel scalability with efficiency up to 80 \%. However, the authors notice that at some point load balancing becomes an issue with decreasing quad-tree height.

The second implementation by the same authors targets distributed memory machines and is done using the Charm++ parallel programming model \cite{kale1993charm++}. The implementation is tested on a large cluster with more than 24000 cores. A presence of modest serial component is noted, which is likely to be attributed to the Charm++ runtime. The load balancing problem is addressed by Charm++ built-in load balancer, which migrates chares (actors in the Charm++ terminology) between the nodes taking into account communication costs.

An approach similar to $\textit{SpAMM}$ and referred to as  on-the-fly filtering of the product matrix is also used in \cite{borvstnik2014sparse}. The authors note that it can provide a speedup of 300\% on realistic matrices coming from discretization of chemical systems with up to $10^6$ atoms on around 5100 cores. They also combine it with removing small blocks from the product matrix. However, no error analysis is given.

\section{Error estimate} \label{error_control} As noted earlier, the $\textit{SpAMM}$ algorithm performs truncation of sub-matrix products, i.e. operates in the product space. Truncation inevitably brings some error in the product matrix. The authors of the original algorithm note that the dependency between the  parameter $\tau$ in the algorithm and the resulting error in the product matrix is not known, see \citep[p. C84]{bock2013optimized}. With the modified version of $\textit{SpAMM}$ \citep{2015arXiv150805856C}, it is possible to derive the relative error bound and to show stability of the process in the sense of \cite{demmel2007fast}. However, this bound does not make any assumptions on the matrix and it is hardly convenient for practical use, since the bound depends on $n^2$, where $n$ is the matrix size.

In Section \ref{intro}, we mentioned the decay property of matrices, which are common in electronic structure calculations. We exploit this property, and ignore matrix elements smaller than a certain threshold value to derive error estimates. In this section we show the asymptotic behavior of the absolute error in the product matrix. We limit ourselves to the case of matrices with exponential decay and state novel lemmas and theorems. The proofs can be found in Appendix \ref{appendix_with_proofs}.

Define first a distance function. We use the same framework as in \cite{Rubensson2018localized}. The function $d(i,j)$ is a distance function on the index set $I = \{1,\ldots,n\}$ if for any $i,j,k \in I$
\begin{enumerate}
\itemsep0em
\item $d(i,j) \geq 0;$
\item $d(i,j) = d(j,i);$
\item $d(i,i) = 0;$
\item $d(i,j) \leq d(i,k) + d(k,j).$
\end{enumerate} In other words, $d(i,j)$ is a pseudo-metric defined on $I.$

\citet{Rubensson2018localized} consider a sequence of matrices $\{A_n\}$ with exponential decay with the same constants $c > 0$ and $\alpha > 0$ and associated distance functions $d_n(i,j)$. Under the assumption $|N_{d_n}(i,R)| \leq \gamma R^{\beta},\,\ \forall R > 0,\, \forall i,$ where $N_{d_n}(i,R)$ is the set of all indices within a distance $R$ away from the $i$-th index and $|N_{d_n}(i,R)|$ is its cardinality, $\gamma > 0$ and $\beta > 0$ are some constants independent of $n$, they show that for any $\varepsilon > 0,$ and for any $n > 0,$ the matrix $A_n$ contains at most $O(n)$ elements greater than $\varepsilon$ in magnitude. Moreover, each row and column of each $A_n$ has a number of entries greater than $\varepsilon$ in magnitude bounded by a constant independent of $n$, see Theorem 4 in \cite{Rubensson2018localized}. The assumption means that the number of indices situated within a distance $R$ away from an index is finite, which holds for the underlying physical systems. The theorem states that each row and column of such a matrix has at most $\kappa$ significant elements, i.e. elements with magnitude greater than $\varepsilon$, where $\kappa$ is a constant independent of $n$. The rest of the elements are not significant, but not necessarily zeros. We directly exploit these results to derive our estimate. 

\begin{definition} \label{def:decay} A sequence of $n \times n$ matrices $\{ A_n \}$ is told to have \textit{exponential decay w.r.t. associated distance functions} $\{d_n(i,j)\}$ for each $n$ if there exist positive constants $\alpha, c$ independent of $n$ such that

\begin{equation} \label{eq:exp_decay_single_sequence}
| [A_n]_{i,j} | \leq c e^{-\alpha d_n(i,j)}\,\,\, \forall i,j=1,\ldots,n.
\end{equation} 

\end{definition}

\begin{definition} \label{def:decay_common} Two sequences of $n \times n$ matrices are told to have \textit{exponential decay w.r.t. common associated distance functions} ${d_n(i,j)}$ for each $n$ if there exist positive constants $\alpha, c_1, c_2$ such that

\begin{equation} \nonumber
| [A_n]_{i,j} | \leq c_1 e^{-\alpha d_n(i,j)},\,\,| [B_n]_{i,j} | \leq c_2 e^{-\alpha d_n(i,j)} \,\,\, \forall i,j=1,\ldots,n.
\end{equation}

\end{definition}

\begin{definition} \label{def_finite_num_vertices} A sequence of $n \times n$ matrices $\{ A_n \}$ with exponential decay w.r.t. associated distance functions $\{d_n(i,j)\}$ is told to have a finite number of indices around each index for any $n$ if there exist positive constants $\gamma$ and $\beta$ independent of $n$ such that 

\begin{equation} \nonumber 
| N_{d_n}(i,R) | \leq \gamma R^{\beta},\,\,\forall i = 1,\ldots,n,\,\,\forall R > 0,
\end{equation} where $N_{d_n}(i,R)$ is the set of all indices within a distance $R$ away from the $i$-th index and $|N_{d_n}(i,R)|$ is its cardinality.

\end{definition}

\begin{lemma} \label{lemma_sum_insignificant}
Let $\{ C_n \}$ be  a sequence of $n \times n$ matrices satisfying definition \ref{def:decay} and assume it also satisfies definition \ref{def_finite_num_vertices}. Then, for any $\varepsilon > 0$ and every $C_n$,

\begin{equation} \nonumber 
\sum\limits_{i,j:\, | [C_n]_{i,j} | \leq \varepsilon} | [C_n]_{i,j} |^2 = \begin{cases} O(n), & n \longrightarrow \infty, \\ O(\varepsilon^p),\, \forall p < 2, & \varepsilon \longrightarrow 0. \end{cases}
\end{equation}

\end{lemma}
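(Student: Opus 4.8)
The plan is to split the sum over insignificant entries into two regimes according to the distance $d_n(i,j)$, exactly where the exponential bound $|[C_n]_{i,j}| \le c e^{-\alpha d_n(i,j)}$ crosses the level $\varepsilon$. Set $R_\varepsilon := \frac{1}{\alpha}\ln(c/\varepsilon)$, so that $c e^{-\alpha d} \le \varepsilon$ precisely when $d \ge R_\varepsilon$. For a fixed row index $i$, I would bound $\sum_{j:\,|[C_n]_{i,j}|\le\varepsilon} |[C_n]_{i,j}|^2$ by
\begin{equation} \nonumber
\sum_{j:\, d_n(i,j) < R_\varepsilon,\ |[C_n]_{i,j}|\le\varepsilon} \varepsilon^2 \;+\; \sum_{j:\, d_n(i,j) \ge R_\varepsilon} c^2 e^{-2\alpha d_n(i,j)}.
\end{equation}
The first term is at most $\varepsilon^2 \cdot |N_{d_n}(i,R_\varepsilon)| \le \varepsilon^2 \gamma R_\varepsilon^\beta$ by Definition \ref{def_finite_num_vertices}; note $\varepsilon^2 R_\varepsilon^\beta = \varepsilon^2 (\ln(c/\varepsilon)/\alpha)^\beta \to 0$ and in fact is $O(\varepsilon^p)$ for every $p<2$ as $\varepsilon\to0$, while for the $n\to\infty$ statement it is simply a constant (independent of $n$) times nothing growing, hence $O(1)$ per row.

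The second term is the genuinely geometric part. I would control it by summing over dyadic (or unit-width) shells $S_k = \{ j : k \le d_n(i,j) < k+1 \}$ for integers $k \ge \lfloor R_\varepsilon \rfloor$. On each shell $|[C_n]_{i,j}|^2 \le c^2 e^{-2\alpha k}$, and $|S_k| \le |N_{d_n}(i,k+1)| \le \gamma (k+1)^\beta$, so the shell contributes at most $\gamma c^2 (k+1)^\beta e^{-2\alpha k}$. Since $\alpha>0$, the series $\sum_{k\ge 0} (k+1)^\beta e^{-2\alpha k}$ converges, and the tail starting at $k = \lfloor R_\varepsilon\rfloor$ is $O(R_\varepsilon^\beta e^{-2\alpha R_\varepsilon}) = O(\varepsilon^2 (\ln(1/\varepsilon))^\beta)$, again $O(\varepsilon^p)$ for every $p<2$, and trivially a constant bound per row for the $n\to\infty$ case. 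Adding the two terms gives a per-row bound that is $O(\varepsilon^p)$ as $\varepsilon\to 0$ and $O(1)$ uniformly in $n$; summing over the $n$ rows yields the $O(n)$ bound as $n\to\infty$ (here $\varepsilon$ is fixed) and keeps the $O(\varepsilon^p)$ bound (here $n$ is fixed, contributing another constant factor at most the number of rows, which is absorbed since we are not tracking $n$-dependence in that regime — or, if one wants both, the clean statement is the one quoted).

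The main obstacle I anticipate is bookkeeping the logarithmic factor $R_\varepsilon^\beta \sim (\ln(1/\varepsilon))^\beta$ and absorbing it into $\varepsilon^p$: one must invoke that for any $p<2$ and any $\beta>0$, $\varepsilon^2(\ln(1/\varepsilon))^\beta = o(\varepsilon^p)$ as $\varepsilon\to0$, which is why the statement cannot be strengthened to $p=2$. A secondary technical point is making sure the shell decomposition is legitimate with a pseudo-metric rather than a metric — but since $N_{d_n}(i,R)$ is defined purely in terms of $d_n$ and the triangle inequality is not actually needed for this particular estimate (only the cardinality bound and nonnegativity of $d_n$), this causes no trouble. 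Everything else is a routine comparison of a sum with a convergent geometric-type series.
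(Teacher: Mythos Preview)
Your proposal is correct and follows essentially the same route as the paper: the same threshold $R_\varepsilon = \alpha^{-1}\ln(c/\varepsilon)$, the same two-case split (near indices bounded by $\varepsilon^2$ times the cardinality $\gamma R_\varepsilon^\beta$, far indices handled by a unit-shell decomposition and a convergent polynomial-times-exponential series), and the same observation that the residual logarithmic factor $(\ln(1/\varepsilon))^\beta$ is what forces $p<2$. The only cosmetic difference is that you argue per row and then sum over rows, whereas the paper sums over all $(i,j)$ directly; you are also slightly more careful than the paper in tracking the $\varepsilon$-dependence of the far-regime tail, correctly identifying it as $O(\varepsilon^2(\ln(1/\varepsilon))^\beta)$ rather than just $O(\varepsilon^2)$.
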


The result of Lemma \ref{lemma_sum_insignificant} should be interpreted as follows: the sum of squares of all elements smaller than $\varepsilon$ in magnitude grows at most proportionally to $n$ when $\varepsilon$ is fixed and $n$ grows, and goes to zero as fast as $\varepsilon^p,\, \forall p < 2$ or faster when $n$ is fixed.  

First we consider the error introduced by truncation of the input matrices, i.e. truncation in the vector space.

\begin{theorem} \label{theorem_truncmul_new}

Let  $\{ A_n \}$ and  $\{ B_n \}$ be sequences of $n \times n$ matrices satisfying definition \ref{def:decay_common}. Assume both sequences satisfy also definition \ref{def_finite_num_vertices}. Let $C_n = A_n B_n$ and $\tilde{C}_n = \tilde{A}_n \tilde{B}_n,$ where
\begin{equation} \label{truncated_matrices}
\begin{split}
[\tilde{A}_n]_{i,j} & = \begin{cases} 0, &\text{if } |[A_n]_{i,j}|  < \tau, \\ [A_n]_{i,j} &\text{otherwise}, \end{cases} \\
[\tilde{B}_n]_{i,j} & = \begin{cases} 0, &\text{if } |[B_n]_{i,j}|  < \tau, \\ [B_n]_{i,j} &\text{otherwise,}  \end{cases}
\end{split}
\end{equation} are the truncated matrices, $\tau > 0$ is some given parameter. Then, the error matrix $E_n = C_n - \tilde{C}_n$ has the following element-wise property:
\begin{equation} \nonumber
| [E_n]_{i,j} | = O(\tau),\,\,\forall i,j = 1,\ldots,n. 
\end{equation} Moreover, 

\begin{equation} \nonumber
\| E_n \|_F = \begin{cases} O(n^{1/2}), & n \longrightarrow \infty, \\ O(\tau^{p/2}),\, \forall p < 2, & \tau \longrightarrow 0. \end{cases}
\end{equation}

\end{theorem}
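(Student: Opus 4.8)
The plan is to reduce everything to Lemma~\ref{lemma_sum_insignificant} together with the elementary fact that, under Definition~\ref{def_finite_num_vertices}, a row (or column) sum of exponentially decaying weights is bounded uniformly in $n$. I would start from the splitting $\tilde A_n = A_n - \Delta A_n$, $\tilde B_n = B_n - \Delta B_n$, where the ``dropped'' matrix $\Delta A_n$ has entries $[A_n]_{i,j}$ wherever $|[A_n]_{i,j}| < \tau$ and $0$ elsewhere (similarly for $\Delta B_n$); note that $|[\Delta A_n]_{i,j}| < \tau$ and $|[\Delta A_n]_{i,j}| \le c_1 e^{-\alpha d_n(i,j)}$ simultaneously, and likewise for $\Delta B_n$. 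Expanding the product yields the identity
\begin{equation} \nonumber
E_n = A_n B_n - (A_n - \Delta A_n)(B_n - \Delta B_n) = \Delta A_n\, B_n + A_n\, \Delta B_n - \Delta A_n\, \Delta B_n ,
\end{equation}
so it suffices to bound the three bilinear remainder terms.

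For the entrywise claim I would use only the crude bound $|[\Delta A_n]_{i,k}| < \tau$: then $|[\Delta A_n B_n]_{i,j}| \le \tau \sum_k |[B_n]_{k,j}| \le \tau c_2 \sum_k e^{-\alpha d_n(k,j)} \le S c_2 \tau$, where $S$ is a constant independent of $n$ and $j$ obtained by summing $e^{-\alpha d_n(k,j)}$ over distance shells and using $|N_{d_n}(j,R)| \le \gamma R^\beta$ (this is exactly the estimate already needed inside the proof of Lemma~\ref{lemma_sum_insignificant}). The terms $A_n\Delta B_n$ and $\Delta A_n\Delta B_n$ are handled the same way, giving $|[E_n]_{i,j}| = O(\tau)$ uniformly in $i,j,n$.

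For the Frobenius estimate, the same shell-summing argument shows that $\|A_n\|_1$, $\|A_n\|_\infty$, $\|B_n\|_1$, $\|B_n\|_\infty$ — and the corresponding norms of $\Delta A_n$ and $\Delta B_n$ — are all bounded independently of $n$, hence so are the spectral norms $\|A_n\|_2$, $\|B_n\|_2$, $\|\Delta A_n\|_2$, $\|\Delta B_n\|_2$. Meanwhile $\|\Delta A_n\|_F^2 = \sum_{i,j:\, |[A_n]_{i,j}| < \tau} |[A_n]_{i,j}|^2$ is precisely the quantity estimated by Lemma~\ref{lemma_sum_insignificant} applied to $\{A_n\}$ with $\varepsilon = \tau$, so it is $O(n)$ as $n \to \infty$ and $O(\tau^p)$, $\forall p < 2$, as $\tau \to 0$; the same holds for $\|\Delta B_n\|_F^2$. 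Applying $\|XY\|_F \le \|X\|_F \|Y\|_2$ and $\|XY\|_F \le \|X\|_2 \|Y\|_F$ term by term in the identity above gives $\|E_n\|_F \le \|\Delta A_n\|_F \|B_n\|_2 + \|A_n\|_2 \|\Delta B_n\|_F + \|\Delta A_n\|_F \|\Delta B_n\|_2$, and each summand inherits the asymptotics of $\|\Delta A_n\|_F$ or $\|\Delta B_n\|_F$, which is the claimed $O(n^{1/2})$ / $O(\tau^{p/2})$ behaviour.

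The only genuinely technical point is the uniform-in-$n$ control of the distance-shell sums $\sum_k e^{-\alpha d_n(k,j)}$ and the deduction of $n$-independent spectral-norm bounds from it; since this is already the mechanism behind Lemma~\ref{lemma_sum_insignificant}, the theorem is essentially bookkeeping on top of it. One small thing to be careful about is to invoke Lemma~\ref{lemma_sum_insignificant} for the original sequences $\{A_n\}$ and $\{B_n\}$, whose entries decay, rather than for $\{\Delta A_n\}$ and $\{\Delta B_n\}$, and to observe that the strict inequality $|[A_n]_{i,j}| < \tau$ versus $\le \tau$ is immaterial for the asymptotics.
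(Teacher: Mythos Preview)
Your argument is correct and takes a genuinely different route from the paper's own proof. The paper does not use the algebraic splitting $E_n = \Delta A_n B_n + A_n \Delta B_n - \Delta A_n \Delta B_n$ at all; instead it carries out an entrywise case analysis of the scalar-product formula for $[C_n]_{i,j}$, classifies each summand according to whether $|[A_n]_{i,k}|$ and $|[B_n]_{k,j}|$ lie above or below $\tau$, and invokes Theorem~4 of \cite{Rubensson2018localized} to bound the number of ``large'' factors per row/column by a constant $\kappa$. For the Frobenius bound the paper passes to the worst case $E_n = C_n$, then uses that the \emph{product} $C_n$ again has exponential decay (Theorem~5 of \cite{Rubensson2018localized}) so that Lemma~\ref{lemma_sum_insignificant} applies to $C_n$ with $\varepsilon=\tau$. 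Your approach instead applies Lemma~\ref{lemma_sum_insignificant} to $A_n$ and $B_n$ separately to control $\|\Delta A_n\|_F$ and $\|\Delta B_n\|_F$, and combines this with uniform spectral-norm bounds obtained from the shell sums via $\|\cdot\|_2 \le \sqrt{\|\cdot\|_1\|\cdot\|_\infty}$ and the mixed inequality $\|XY\|_F \le \|X\|_F\|Y\|_2$. The payoff of your route is that it is cleaner operator-norm calculus and does not need the external fact that products of decaying matrices decay; the paper's route is more hands-on and makes the elementwise $O(\tau)$ constant explicit ($\delta = 2\kappa c + c_2\sigma$), at the cost of a somewhat informal ``worst-case scenario'' reduction for the Frobenius estimate.
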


The result of Theorem \ref{theorem_truncmul_new} should be read as follows: the absolute value of each element in the error matrix decays linearly or faster with $\tau$ and the Frobenius norm of the error matrix grows at most as $n^{1/2}$ for fixed $\tau$ and decays as $\tau^{p/2},\,\forall p < 2$ or faster when $n$ is fixed.

The $\textit{SpAMM}$ algorithm has the same behavior of the error, as in the multiplication of truncated matrices.

\begin{theorem} \label{theorem_spamm_new}

Let  $\{ A_n \}$ and  $\{ B_n \}$ be sequences of $n \times n$ matrices satisfying definition \ref{def:decay_common}. Assume both sequences satisfy also definition \ref{def_finite_num_vertices}. Let $C_n = A_n B_n$ and $\bar{C}_n = SpAMM(A_n,B_n,\tau),$ where $\tau > 0$ is some given parameter. Then, the error matrix $E_n = C_n - \bar{C}_n$ has the following element-wise property:
\begin{equation} \nonumber
| [E_n]_{i,j} | = O(\tau),\,\,\forall i,j = 1,\ldots,n. 
\end{equation} Moreover, 

\begin{equation} \nonumber
\| E_n \|_F = \begin{cases} O(n^{1/2}), & n \longrightarrow \infty, \\ O(\tau^{p/2}),\, \forall p < 2, & \tau \longrightarrow 0. \end{cases}
\end{equation}

\end{theorem}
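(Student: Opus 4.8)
The plan is to show that the error matrix $E_n = C_n - \bar{C}_n$ is itself a sequence of matrices with exponential decay with respect to $\{d_n\}$ all of whose entries are of order $\tau$, and then to extract the Frobenius estimate directly from Lemma \ref{lemma_sum_insignificant}. This mirrors the proof of Theorem \ref{theorem_truncmul_new}; the only genuinely new ingredient is the combinatorial description of which contributions the recursion drops.

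First I would unfold the recursion in Algorithm \ref{SPAMM_pseudocode}. Because the leaf level returns the exact product and a contribution is discarded only at the norm-product tests, one gets $[\bar{C}_n]_{i,j} = \sum_{k \in S_{i,j}} [A_n]_{i,k}[B_n]_{k,j}$, where $S_{i,j}$ is the set of middle indices $k$ such that no block enclosing the pair of positions $(i,k)$ and $(k,j)$ is skipped along the recursion; hence $[E_n]_{i,j} = \sum_{k \notin S_{i,j}} [A_n]_{i,k}[B_n]_{k,j}$. If $k \notin S_{i,j}$, then at some level $t$ the blocks $A^t_{I_i,K} \ni (i,k)$ and $B^t_{K,J_j} \ni (k,j)$ satisfy $\|A^t_{I_i,K}\|_F \|B^t_{K,J_j}\|_F < \tau$; since the magnitude of an entry never exceeds the Frobenius norm of the block containing it, this gives $|[A_n]_{i,k}[B_n]_{k,j}| < \tau$ for every dropped $k$, and of course also $|[A_n]_{i,k}[B_n]_{k,j}| \le c_1 c_2 e^{-\alpha d_n(i,k)} e^{-\alpha d_n(k,j)}$ by Definition \ref{def:decay_common}.

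Next I would estimate the two sums. For the entrywise bound, $|[E_n]_{i,j}| \le \sum_k \min\{\tau,\, c_1 c_2 e^{-\alpha d_n(i,k)}\}$; splitting at the radius $R = \alpha^{-1}\ln(c_1 c_2/\tau)$ and using $|N_{d_n}(i,R)| \le \gamma R^\beta$ (Definition \ref{def_finite_num_vertices}) for the near indices and the exponential tail for the far ones yields a bound of the form $\mathrm{const}\cdot R^\beta \tau$, i.e. of order $\tau$ uniformly in $n$, $i$, $j$ (up to a harmless $(\ln(1/\tau))^{\beta}$ factor in the $\tau \to 0$ regime); this is exactly the estimate behind Theorem \ref{theorem_truncmul_new}. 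For the decay of $E_n$ itself, $|[E_n]_{i,j}| \le |[C_n]_{i,j}| + |[\bar{C}_n]_{i,j}| \le \sum_k c_1 c_2 e^{-\alpha(d_n(i,k)+d_n(k,j))}$, and the triangle inequality in the form $d_n(i,k)+d_n(k,j) \ge \tfrac12 d_n(i,j) + \tfrac12 d_n(i,k)$ together with Definition \ref{def_finite_num_vertices} gives $|[E_n]_{i,j}| \le c_3 e^{-(\alpha/2) d_n(i,j)}$ with $c_3$ independent of $n$. Thus $\{E_n\}$ satisfies Definitions \ref{def:decay} and \ref{def_finite_num_vertices}.

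Finally, write $\varepsilon_0$ for the uniform entrywise bound of order $\tau$ just obtained. Since $|[E_n]_{i,j}| \le \varepsilon_0$ for all $i,j$, we have $\|E_n\|_F^2 = \sum_{i,j}|[E_n]_{i,j}|^2 = \sum_{i,j:\,|[E_n]_{i,j}| \le \varepsilon_0}|[E_n]_{i,j}|^2$, and Lemma \ref{lemma_sum_insignificant} applied to $\{E_n\}$ with $\varepsilon = \varepsilon_0$ gives $\|E_n\|_F^2 = O(n)$ as $n \to \infty$ and $\|E_n\|_F^2 = O(\varepsilon_0^{\,p}) = O(\tau^{\,p})$ for every $p < 2$ as $\tau \to 0$ (the logarithmic factor in $\varepsilon_0$ is absorbed by decreasing $p$ slightly). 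Taking square roots gives the two claimed rates. The step I expect to be the main obstacle is the first one: pinning down the set $S_{i,j}$ rigorously from the recursive algorithm and checking that the discarded contributions regroup precisely as $\sum_{k \notin S_{i,j}}[A_n]_{i,k}[B_n]_{k,j}$; once that bookkeeping is done, the remaining estimates run parallel to those in the proof of Theorem \ref{theorem_truncmul_new}.
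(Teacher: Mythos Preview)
Your proposal is correct and follows essentially the same line as the paper: both arguments rest on the observation that any index $k$ dropped by the recursion satisfies $|[A_n]_{i,k}[B_n]_{k,j}| < \tau$ (since the entrywise product is dominated by the block Frobenius-norm product at every level), after which the entrywise and Frobenius estimates reduce to the machinery already developed for Theorem~\ref{theorem_truncmul_new}.

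There is one organisational difference worth noting. The paper argues via a worst-case reduction: it bounds $|[E_n]_{i,j}|$ by the scenario $\bar C_n = 0$, so that $E_n = C_n$, and then invokes the known exponential decay of the exact product $C_n$ (Theorem~5 of \cite{Rubensson2018localized}) together with Lemma~\ref{lemma_sum_insignificant} split at $\varepsilon = \tau$. You instead show directly that $\{E_n\}$ itself has exponential decay with rate $\alpha/2$ and then apply Lemma~\ref{lemma_sum_insignificant} to $E_n$ with $\varepsilon$ equal to your uniform entrywise bound $\varepsilon_0$. Your route is slightly cleaner in that it avoids the worst-case detour and the separate bookkeeping of ``significant'' versus ``insignificant'' entries of $C_n$; the paper's route is marginally shorter because it can quote the decay of $C_n$ rather than rederive it for $E_n$. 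Your concern about rigorously identifying $S_{i,j}$ is in fact moot: because the Frobenius norm of a block dominates the Frobenius norm of any sub-block, the recursion drops the contribution at index $k$ if and only if the finest-level test $|[A_n]_{i,k}||[B_n]_{k,j}| < \tau$ fails, which is exactly the paper's characterisation $[\chi_n]_{ikj} = 0$.
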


If one performs truncation in both vector and product spaces, i.e. applies the $\textit{SpAMM}$ algorithm on truncated matrices, then the norm of the error matrix still behaves similarly.

\begin{theorem} \label{theorem_tuncated_spamm_new}
Let  $\{ A_n \}$ and  $\{ B_n \}$ be sequences of $n \times n$ matrices satisfying definition \ref{def:decay_common}. Assume both sequences satisfy also definition \ref{def_finite_num_vertices}. Let $C_n = A_n B_n$ and $\hat{C}_n = SpAMM(\tilde{A}_n,\tilde{B}_n,\tau),$ where $\tilde{A}_n$ and $\tilde{B}_n$
are the truncated matrices \eqref{truncated_matrices} and $\tau > 0$ is some given parameter. Then, the error matrix $E_n = C_n - \hat{C}_n$ has the following element-wise property:
\begin{equation} \nonumber
| [E_n]_{i,j} | = O(\tau),\,\,\forall i,j = 1,\ldots,n. 
\end{equation} Moreover, 

\begin{equation} \nonumber
\| E_n \|_F = \begin{cases} O(n^{1/2}), & n \longrightarrow \infty, \\ O(\tau^{p/2}),\, \forall p < 2, & \tau \longrightarrow 0. \end{cases}
\end{equation}
\end{theorem}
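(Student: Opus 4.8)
The plan is to reduce Theorem \ref{theorem_tuncated_spamm_new} to the two preceding theorems by a triangle\Hyphdash inequality splitting through the exact product of the truncated matrices. Introduce the intermediate matrix $\tilde{C}_n = \tilde{A}_n \tilde{B}_n$ and write
\begin{equation} \nonumber
E_n = C_n - \hat{C}_n = (C_n - \tilde{C}_n) + (\tilde{C}_n - \hat{C}_n).
\end{equation}
The first summand is precisely the error matrix analyzed in Theorem \ref{theorem_truncmul_new}, so both its element\Hyphdash wise $O(\tau)$ bound and its Frobenius\Hyphdash norm bound $O(n^{1/2})$ / $O(\tau^{p/2})$ are available immediately. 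The second summand equals $\tilde{A}_n\tilde{B}_n - SpAMM(\tilde{A}_n,\tilde{B}_n,\tau)$ by definition of $\hat{C}_n$, i.e. it is exactly the $\textit{SpAMM}$ error for the input pair $(\tilde{A}_n,\tilde{B}_n)$.

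The key verification step is that the truncated sequences $\{\tilde{A}_n\}$ and $\{\tilde{B}_n\}$ themselves satisfy the hypotheses of Theorem \ref{theorem_spamm_new}. For Definition \ref{def:decay_common}: since $[\tilde{A}_n]_{i,j}$ equals either $0$ or $[A_n]_{i,j}$, we always have $|[\tilde{A}_n]_{i,j}| \leq |[A_n]_{i,j}| \leq c_1 e^{-\alpha d_n(i,j)}$, and likewise $|[\tilde{B}_n]_{i,j}| \leq c_2 e^{-\alpha d_n(i,j)}$; hence $\{\tilde{A}_n\}$ and $\{\tilde{B}_n\}$ have exponential decay w.r.t. the same common distance functions $d_n$ with the same constants $\alpha, c_1, c_2$. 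For Definition \ref{def_finite_num_vertices}: this is a property of the distance functions $d_n$ alone, which truncation leaves untouched. Therefore Theorem \ref{theorem_spamm_new} applies verbatim to $(\tilde{A}_n,\tilde{B}_n)$ and gives $|[\tilde{C}_n - \hat{C}_n]_{i,j}| = O(\tau)$ for all $i,j$, together with $\|\tilde{C}_n - \hat{C}_n\|_F = O(n^{1/2})$ as $n\to\infty$ and $= O(\tau^{p/2})$ for all $p<2$ as $\tau\to 0$.

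Finally, combine the two bounds. Element\Hyphdash wise, $|[E_n]_{i,j}| \leq |[C_n - \tilde{C}_n]_{i,j}| + |[\tilde{C}_n - \hat{C}_n]_{i,j}| = O(\tau) + O(\tau) = O(\tau)$. For the Frobenius norm, $\|E_n\|_F \leq \|C_n - \tilde{C}_n\|_F + \|\tilde{C}_n - \hat{C}_n\|_F$, and the sum of two quantities each $O(n^{1/2})$ (resp. $O(\tau^{p/2})$) is again $O(n^{1/2})$ (resp. $O(\tau^{p/2})$), which is the assertion.

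I expect the only real subtlety — a point that calls for care rather than genuine difficulty — to be confirming that the implicit constants in both invoked theorems are independent of $n$, so that their sum is a bona fide $O(n^{1/2})$ / $O(\tau^{p/2})$ bound with an $n$\Hyphdash independent constant; this holds because the governing constants $\alpha, c_1, c_2, \gamma, \beta$ are $n$\Hyphdash independent and are preserved under truncation. No fresh estimate on sums of insignificant elements or on the $\textit{SpAMM}$ recursion is needed: the statement follows as a corollary of the decomposition together with Theorems \ref{theorem_truncmul_new} and \ref{theorem_spamm_new}.
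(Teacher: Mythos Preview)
Your proposal is correct and follows the same triangle\Hyphdash inequality splitting through $\tilde{C}_n = \tilde{A}_n\tilde{B}_n$ that the paper uses, invoking Theorems \ref{theorem_truncmul_new} and \ref{theorem_spamm_new} for the two pieces. If anything, your version is more complete: you explicitly verify that the truncated matrices inherit the decay hypotheses (which the paper tacitly assumes) and you handle the Frobenius norm directly via the triangle inequality, whereas the paper simply remarks that this part is ``technical and very similar'' to the earlier proofs and omits it.
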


We show in Section \ref{results} that in a distributed environment, it is profitable to combine multiplication of truncated matrices and $\textit{SpAMM}$ into a new technique, referred to as hybrid.

\section{Implementation details} \label{inplementation_cht} We implement the three algorithms using the Chunks and Tasks programming model \cite{CHT-PARCO-2014}. In this section we briefly introduce the model and its concepts, describe the matrix representation, and discuss implementation issues. 

\subsection{The Chunks and Tasks programming model} \label{cht_model}

The Chunks and Tasks model belongs to a family of task-based programming models. It is defined by a C++ API. The basis of the model comprises two basic abstractions - a chunk and a task. A chunk represents a piece of data whereas a task represents a piece of work. The parallelism is exploited in both work and data dimensions.

The data is incapsulated in chunks objects, which are registered to the runtime library. The user operates chunk identifiers, which are obtained after chunk registration. Once a chunk is registered, its modification is no longer possible (similarly to the Concurrent Collections model \cite{budimlic2010concurrent}). Chunk identifiers can be provided as input to tasks or be a part of other chunks. This allows to build hierarchies of chunks and the runtime takes responsibility of managing such hierarchies.

The work is expressed in terms of tasks. A task has one or more input chunk identifiers and a single output chunk identifier. Access to chunk data is possible only as input to a task, and one cannot execute a task without having all input chunks available. The user is free to register other tasks inside a task, giving rise to dynamic parallelism.

A proof-of-concept runtime library \citep{CHT-PARCO-2014} is implemented in C++ and internally parallelized with MPI and Pthreads. A Chunks and Tasks program is executed by a group of worker processes. To address load balancing, the implementation uses the concept of work-stealing, similarly to XKaapi \cite{gautier2013xkaapi}, Cilk \cite{blumofe1996cilk}, StarPU \cite{augonnet2011starpu}.

The set of rules introduced by the model is intended to simplify parallel programming for distributed memory machines by reducing the chance of getting a deadlock to zero. Moreover, immutable data prohibits a race condition of any kind. However, it takes a certain effort for a conventional MPI programmer to shift the programming paradigm. 

\subsection{Matrix representation} \label{matrix_representation}

As we discussed in the beginning of the section, in the Chunks and Tasks model all data is encapsulated in chunk objects. Matrices are no exception. Similarly to \citep{rubensson2007hierarchic}, we employ a hierarchical matrix representation. Not only is it naturally suitable for recursive algorithms like $\textit{SpAMM}$ or localized inverse factorization \cite{Artemov2018parallelization}, but it is easily mapped onto the Chunks and Tasks model. \citet{rubensson2016locality} present a matrix library implemented using the model (CHTML). A matrix is viewed as a quadtree, and all non-leaf nodes are chunks which contain identifiers of child nodes. Matrices containing no non-zero elements are not kept in the memory, they are represented by a special identifier value. Matrix elements are kept at the leaf level only. Actual computations take place only if the leaf level is reached. This approach allows to skip computations on zero sub-matrices, and thus exploit sparsity patterns dynamically. 

The leaf level matrices are in shared memory and can be organized in different ways. One could keep them dense, or use a block-sparse representation, where non-zero blocks are kept in a 2-D array, see \cite{rubensson2016locality} for details and discussion. For efficient computations, leaf level routines rely on an efficient implementation of BLAS (Basic Linear Algebra Subroutines \cite{dongarra1990set}), which is usually provided by a vendor or on another high-performance library which handles basic operations.

In order to be consistent with the $\textit{SpAMM}$ algorithm, each matrix stores its own squared Frobenius norm. This norm is to be computed when the matrix is constructed as an output of a task.

\subsection{SpAMM task type}

We express the $\textit{SpAMM}$ algorithm in terms of the Chunks and Tasks matrix library. The main difference from matrix-matrix multiplication introduced by \citet{rubensson2016locality} is that the task checks Frobenius norms of multipliers and if their norm product is small enough, the output chunk is set to null. Otherwise the same task is registered for sub-matrices and the process continues recursively until reaching the leaf level. The $\tau$ parameter is provided as one of the input chunks.

\subsection{Leaf level library} 

As we discussed in Section \ref{matrix_representation}, CHTML can be used together with different leaf level representations. The previous works \citep{rubensson2016locality, Artemov2018parallelization} use a block-sparse leaf level library. However, it does not suite the $\textit{SpAMM}$ algorithm, since it is not consistent with the hierarchical structure used in the algorithm. 

In order to be fully consistent, we develop a new leaf level library for this work, namely the hierarchical block-sparse library, which relies on a hierarchical matrix representation as the name suggests. The library represents a matrix as a quad-tree and zero matrices as null pointers. The hierarchy does not go down to single matrix elements. It stops when reaching a certain predefined block size or if a submatrix is identically zero. A dense lowest-level block is kept if it contains at least one non-zero value. Hence, in some sense it exploits blocking ideas similar to block-sparse leaf matrix in \cite{rubensson2016locality,bulucc2012parallel}. 

The advantage of the hierarchical block-sparse representation over the previously used flat block-sparse library is that with the latter one always has to iterate through a large number of small blocks and look for a non-zero block before accessing it. In the hierarchical representation, larger zero blocks are skipped higher in the hierarchy when accessing a non-zero block. 

In real applications most of the multiplication rejections in the $\textit{SpAMM}$ algorithm do happen for small sub-matrices. That is partially due to the nature of the Frobenius norm and, in principle, could be addressed by making the leaf level size smaller and using another leaf library (for instance, the aforementioned block-sparse one, which is not hierarchical), but this approach leads to great administrative overhead, as there will be more levels in the chunk hierarchy on the distributed level, and, consequently, performance degradation. Our solution is to make the algorithm also operate at the leaf level and the new library addresses that.

Currently, the hierarchical block-sparse matrix library supports a set of matrix operations including multiplication, addition, rescaling, computation of inverse factor and $\textit{SpAMM}$. Corresponding BLAS operations are called at the lowest level of the hierarchy. As one can see, the set of operations is consistent with the functionality of the Chunks and Tasks matrix library (CHTML), which can be found in \cite{rubensson2016locality}. This is done for compatibility purposes. 

There are two ways how to do multiplication: the first is to multiply \textit{when} building the output matrix, i.e. to allocate the hierarchical structure on-the-fly, the second is to build the whole matrix quad-tree \textit{in advance}, and then to  perform all the lowest-level multiplications in a batched way and populate the leafs of the output matrix with results. In experiments we always use the latter approach. It is implemented using the \verb|unordered multimap| container from the C++ STL library. Currently we assume that the leaf library is entirely serial, but, if needed, the batched multiplication can be efficiently parallelized using the C++ Parallelism TS, which is a part of the C++17 standard, but not yet supported in all compilers. 

Since the hierarchical block-sparse leaf matrix library operates in shared memory, some routines can be efficiently improved compared to the distributed execution with the Chunks and Tasks.

Consider now how the multiplication is done in the Chunks and Tasks matrix library, described in Algorithm \ref{MM_pseudocode}. At line \ref{matmul_algline1} the input is checked and if any of the input matrices is zero (represented as a special null identifier), then the output matrix identifier is also set to null. Lines \ref{matmul_algline2}--\ref{matmul_algline4} describe leaf-level operations and lines \ref{matmul_algline5}--\ref{matmul_algline14} describe higher level operations. The code is organized in this manner due to one of the imposed limitations: a chunk content can be accessed only as an input to a task, i.e. only a single level of the hierarchy is visible at a time. In other words, if neither of the two input matrices are zero, then 13 tasks (8 multiplications, 4 additions, creating a matrix out of child identifiers) will be anyway registered, unless the lowest level is involved. Registration of tasks definitely leads to overhead.

\begin{algorithm}[t]
\begin{algorithmic}[1]
\REQUIRE $A, B$
\ENSURE $C$
\IF{$A$ not NULL \textbf{and} $B$ not NULL} \label{matmul_algline1}
\IF{lowest level}	\label{matmul_algline2}
	\STATE{ $X = $ leafMatrixMultiply$(A,B);$} \label{matmul_algline3}
	\STATE{ $C$ = registerChunk$(X)$; } \label{matmul_algline4}
\ELSE \label{matmul_algline5}
	\FOR{$m=1$ \TO 2 } \label{matmul_algline6}  
		\FOR{$n=1$ \TO 2 } \label{matmul_algline7}
			\STATE{$Y_1 = $ registerTask(MatrixMultiply, $A_{m,1}, B_{1,n}$);} \label{matmul_algline8}
			\STATE{$Y_2 = $ registerTask(MatrixMultiply, $A_{m,2}, B_{2,n}$);} \label{matmul_algline9}
			\STATE{$C_{m,n} = $ registerTask(Add, $Y_1, Y_2$);} \label{matmul_algline10}
		\ENDFOR \label{matmul_algline11}	 
	\ENDFOR \label{matmul_algline12}
	\STATE{$C = \begin{bmatrix} \label{matmul_algline13}
	C_{1,1} & C_{1,2} \\ C_{2,1} & C_{2,2}
	\end{bmatrix};$}
\ENDIF \label{matmul_algline14}
\ELSE \label{matmul_algline15}
\STATE{$C = $ NULL;} \label{matmul_algline16}
\ENDIF \label{matmul_algline17}
\STATE{\textbf{return} $C$} \label{matmul_algline18}
\end{algorithmic}
\caption{MatrixMultiply}
\label{MM_pseudocode}
\end{algorithm}

In a shared memory environment the multiplication of hierarchical matrices can be improved. Consider an illustrative example of such multiplication in \eqref{nonoptimal_MM}. As one can see, although none of the input matrices is empty, the resulting matrix is empty. So, it is not worth to multiply those matrices at all. However, we could only detect this when seeing the whole hierarchy, but not the two matrix identifiers.

Since the leaf level library works in shared memory, it is indeed possible to see the whole hierarchy at once and predict if it is worth to start multiplication of two matrices or to do a memory allocation in case of batched multiplication. This check should be done recursively, since it might happen that the resulting matrix $C$ does have a non-zero entry, see (\ref{nonoptimal_MM2}). In this case, prediction also should be done when multiplying $A_1$ by $B_3$. If those matrices give non-zero product, then the multiplication routine is invoked.

In our implementation, such prediction is utilized not only for the multiplication routine, but also for the others, including the $\textit{SpAMM}$ routine, where the conclusion is based not only on the layout of children and recursive checks, but also on the norms of corresponding matrices. This allows our library to be competitive in terms of performance with the block-sparse library used in the previous works. More details can be found in Section \ref{results}.

Though predictor routines are called before every call to multiplication and thus extra traversals of the quad-tree are done, the overhead is small compared to the overhead of memory allocation for new matrix objects, which anyway would not be used. Prediction is cheap because it involves only pointer arithmetic and logical operations. For instance, we generate two matrices of size 2048 with non-zero random blocks distributed randomly uniformly with block-size 64. With fill-in factor (i.e. the proportion of non-zero elements in the matrix) 0.2 multiplication takes approximately 0.036 seconds, and the corresponding prediction takes $2.71\times 10^{-5}$ seconds. For fill-in factor 0.7, multiplication takes approximately 0.44 seconds, whereas prediction takes $4.84 \times 10^{-5}$ seconds, i.e. several orders of magnitude less.

\begin{equation} \label{nonoptimal_MM}
\begin{bmatrix}
A_0 & \textit{NULL} \\ \textit{NULL} & \textit{NULL}
\end{bmatrix} \times
\begin{bmatrix}
\textit{NULL} & \textit{NULL} \\ \textit{NULL} & B_3
\end{bmatrix} = 
\begin{bmatrix}
\textit{NULL} & \textit{NULL} \\ \textit{NULL} & \textit{NULL}
\end{bmatrix}
\end{equation}

\begin{equation} \label{nonoptimal_MM2}
\begin{bmatrix}
A_0 & A_1 \\ \textit{NULL} & \textit{NULL}
\end{bmatrix} \times
\begin{bmatrix}
\textit{NULL} & \textit{NULL} \\ \textit{NULL} & B_3
\end{bmatrix} = 
\begin{bmatrix}
\textit{NULL} & A_1 \times B_3 \\ \textit{NULL} & \textit{NULL}
\end{bmatrix}
\end{equation}

\subsection{The full two-level algorithm} The full implementation of $\textit{SpAMM}$ then reads as follows: at the level of the Chunks and Tasks library, the recursive task-based version of Algorithm \ref{SPAMM_pseudocode} is utilized. The matrices are split into hierarchies until a predefined relatively large size (so that one task could occupy a whole computational node) is reached. We further refer to the corresponding size as to \textit{task size}. Inside those blocks the matrices are kept in hierarchies as well using the hierarchical block-sparse leaf level library. When the task-based code eventually arrives to the base case after several recursive calls, the corresponding $\textit{SpAMM}$ method of the leaf library is called. Actual computations are performed there and the resulting block products are computed. Then, a chunk hierarchy of the resulting product is built up starting from leafs. 

\section{Results} \label{results} 

We provide a set of experiments, carried out in order to evaluate the performance of the leaf level library and the approximate multiplications algorithms. Here and further we refer to multiplication of truncated matrices as "Truncmul," $\textit{SpAMM}$ as "SpAMM" and their combination as "Hybrid."

\subsection{Experimental setup} \label{experiments}

All experiments are done at PDC high performance computing center located at KTH Royal Institute of Technology in Stockholm. The target machine is the Beskow cluster, which is a Cray machine equipped with 2060 nodes each containing 2 16-core Intel Xeon E5-2698v3 CPUs combined with 64 gigabytes of RAM. The base CPU frequency (without boost) is 2.3 GHz. The nodes are connected in Dragonfly topology using the Cray Aries high speed network. The code is compiled with GCC 8.3.0 g++ compiler, Cray MPICH 7.7.8 and OpenBLAS 0.3.9 \cite{OpenBLAS}. The OpenBLAS library is built from the source code with disabled multi-threading. This is highly recommended by the developers in case an application uses multi-threading somewhere else, which is the case with the Chunks and Tasks library. 

\subsection{Performance of the leaf level library} In this benchmark the aim is to investigate the performance of the new hierarchical block-sparse matrix library and compare it with the existing block-sparse library. For this, we perform matrix-matrix multiplication at the leaf level, and the Chunks and Tasks library is not used at all. Two matrices of size $2048 \times 2048$ of varying fill-in factor are multiplied. The non-zero blocks of the matrices are distributed randomly uniformly over the matrix. All computations are performed in double precision. The total number of multiplications is computed by counting calls to BLAS \verb|gemm| routine. As a reference when reporting the performance of our implementation, we use a theoretical peak performance model based on \cite{dolbeau2018theoretical}. It takes into account a variety of instruction sets, also known as intrinsics. Modern compilers automatically generate codes which use intrinsics. The target CPU belongs to the Intel Haswell family, thus it has four different instruction sets: SSE, AVX, AVX2 and FMA. The last two are often referred as the same, but technically they are distinct, since FMA instructions have their own flag. According to the performance model, the peak performance of a single core can be computed as follows (case AVX+FMA (DP) of Table 4 in \cite{dolbeau2018theoretical}):
\begin{equation}
P = F \times \frac{flops}{operation} \times \frac{operations}{instruction} \times \frac{instructions}{cycle}, \label{peak_perforance_single_core}
\end{equation} where $F$ is the CPUs frequency in Hz, which depends on the number of active cores and the instruction set ($3.3 \times 10^9$ Hz for a single core executing AVX and $2.5 \times 10^9$ Hz for all cores executing AVX \cite{IntelXeonSpecs}), $flops~/$ $operation = 2$ is the number of floating point operations included in the instruction's semantic (commonly, fused operation like multiply-add are used, hence the value), $operations~/$ $ instruction = 4$ is the number of operands in the SIMD operation, which is $256 / 64 = 4$ for AVX registers operating with doubles, $instructions~/$ $ cycle = 2$ is the number of simultaneously executed instructions in a single cycle. The latter quantity is rather complicated to compute, but, as mentioned in \cite{dolbeau2018theoretical}, it can be approximated well enough by the presented value. The formula \eqref{peak_perforance_single_core} gives the peak performance of a single CPU core equal to $3.3 \text{GHz}\, \times 2 \times 4 \times 2\, \text{flops/cycle}  = 52.8$ Gflop/s, thus the overall peak performance of the CPU is $2.5 \text{GHz}\, \times\, 2\, \times\, 4\, \times\, 2\, \text{flops/cycle}\, \times\, 16\, \text{cores} = 640$ Gflop/s, which is consistent with CPU benchmark results available on the Internet \cite{IntelXeonSpecs}. We also measure the practical peak performance of the OpenBLAS \citep{OpenBLAS} library on a single core, which is $\approx 42$ Gflop/s, by multiplying two random dense matrices of size 2048 several times. Taking into account the frequency drop from $3.3$ GHz to $2.5$ GHz when all cores execute AVX instructions, the practical peak performance of the CPU is $\frac{42}{52.8} \times 100 \% \approx$ 80\% of the theoretical peak, i.e. $512$ Gflop/s.
 
\begin{figure}[h]
\center{\includegraphics[width=0.9\linewidth]{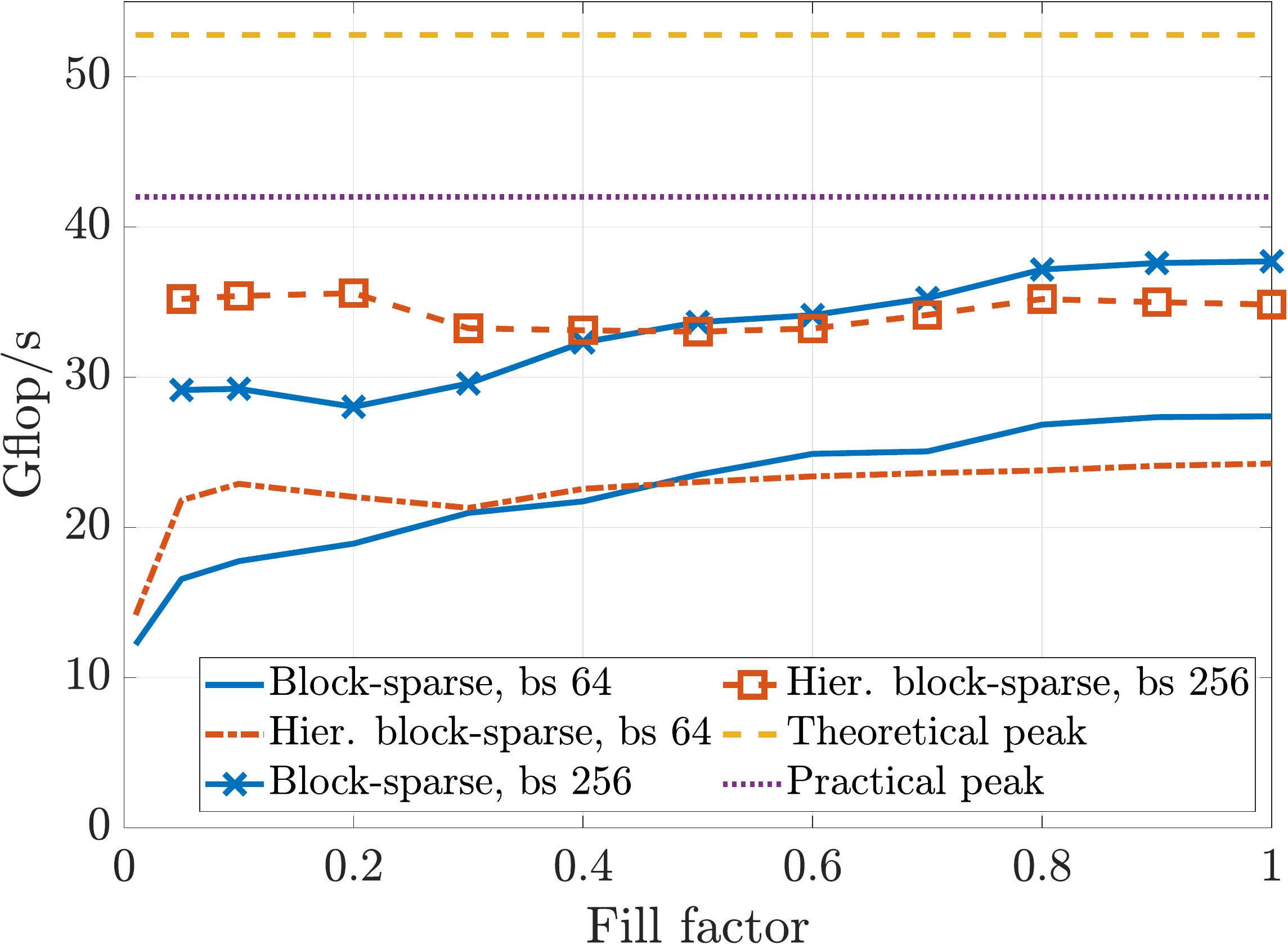}}
\caption{Leaf level libraries performance on a single core measured on a matrix-matrix multiplication of size $2048 \times 2048$ with double precision, two different blocksizes used. The non-zero random blocks were randomly uniformly distributed over the matrices.}
\label{ris:bsm_vs_hbsm}
\end{figure}

The benchmark results for two particular blocksizes are presented in Figure \ref{ris:bsm_vs_hbsm}. As one can observe, the hierarchical block-sparse library delivers almost a constant performance with increasing fill-in factor, whereas the block-sparse library has increasing performance. Clearly, there is a cross-point for both presented block-sizes, which indicates that the hierarchical library is superior for matrices with fill-in factors smaller than 0.3. The theoretical and practical peak performances are plotted in dashed lines for reference. For $bs = 256$ the hierarchical block-sparse library attains almost 65\% of theoretical peak performance for any fill-in factor, while the block-sparse library attains up to 70\% depending on the fill-in factor. Note that in real applications, the leaf library handles blocks of a larger distributed matrix, and therefore the fill-in factor varies from block to block.

\subsection{Performance on a model problem}

The aim of this benchmark is to investigate the performance of the Chunks and Tasks implementations of the three methods considered in Section \ref{error_control}. 

We create a matrix with exponential decay away from the main diagonal, i.e. $|A_{i,j}| \leq c e^{-\alpha |i-j|}$ with $c = 1$ and the decay rate $\alpha = 0.005$. Elements smaller than $10^{-16}$ in magnitude are set to zero by construction. The matrix is then multiplied by itself with the three methods described above. With Truncmul, no multiplication operation is skipped when computing the result, and all calls to \verb|gemm| are accounted. We sum calls to BLAS \verb|gemm| routine and then calculate the the total number of floating point operations as 

\begin{equation}
flops = 2 \times bs^3 \times N_{gemm},
\end{equation} where $bs$ is the size of the block at the lowest level of the leaf-level library, i.e. blocks of size $bs$ are given to BLAS routines, $N_{gemm}$ is the total number of calls to the \verb|gemm| routine. Once the total number of floating point operations is known, one can calculate the performance of the Truncmul method.

In turn, $\textit{SpAMM}$ and Hybrid methods skip multiplications of sub-matrices of various sizes depending on the parameter $\tau$ and it is not quite correct to compare their performances in Gflop/s. Instead, we propose the following: we select a target accuracy $\sigma$ of the final product matrix, i.e. the maximum Frobenius norm of the difference between the approximate and true products. For each of the algorithms, we perform a set of tests with varying threshold $\tau$ and pick the largest one such that the error matrix norm does not exceed $\sigma.$ We measure the time in each case and compare the timings. To have a reference to the theoretical/practical peaks, we provide the performance results in Gflop/s for Truncmul.

We assume that the Frobenius norm on each level of the matrix hierarchy is given (precomputed). Therefore we do not include the time needed to compute it to the total time. In practice, getting this axillary takes time, but in the hierarchical approach it can be done rather efficiently.

\paragraph{Single node performance} For this benchmark, we employ the CHT-MPI Chunks and Tasks library implementation and the CHTML matrix library. We set a single worker process to occupy the whole computational node and to perform all the tasks and no communication is involved. Within the worker, the tasks are executed in parallel, if possible, by 31 available threads, whereas a single thread is left for communication just to be consistent with the multiple node case. We set $n = 4 \times 10^4,$  the task size, i.e. the size where a piece of matrix processed by the leaf level library, is 2048, $bs = 32$, 64, 128, 256. The parameters are chosen so that the computation fits the amount of RAM available in a single node. We apply nine different values of $\tau$, namely $10^{-4},\,10^{-5}, \ldots, 10^{-12}.$  The target accuracy is $\sigma = 10^{-6}.$ The average over two turns of this benchmark test is presented in Figure \ref{ris:model_problem_perfromance}, left panel. As one can observe, Truncmul reaches around 33\% of the practical performance peak. If timings are considered, then it can be seen that both $\textit{SpAMM}$ and Hybrid are around 25--30 \% faster than Truncmul. Once can also see in Figure \ref{ris:model_problem_perfromance}, right panel, how different the  number of calls to \verb|gemm| is for Truncmul compared with the other two methods: the two latter methods generate about 40 \% fewer calls and \textit{SpaMM} generates slightly fewer calls than the Hybrid method. It also turns out that the threshold $\tau$ should be $10^{-10}$ for block-sizes $32, 64, 128$ and decreased to $10^{-11}$ for block-size 256 to preserve the accuracy.

\begin{figure*}[ht]
\begin{minipage}[h]{0.45\linewidth}
\center{\includegraphics[width=1\linewidth]{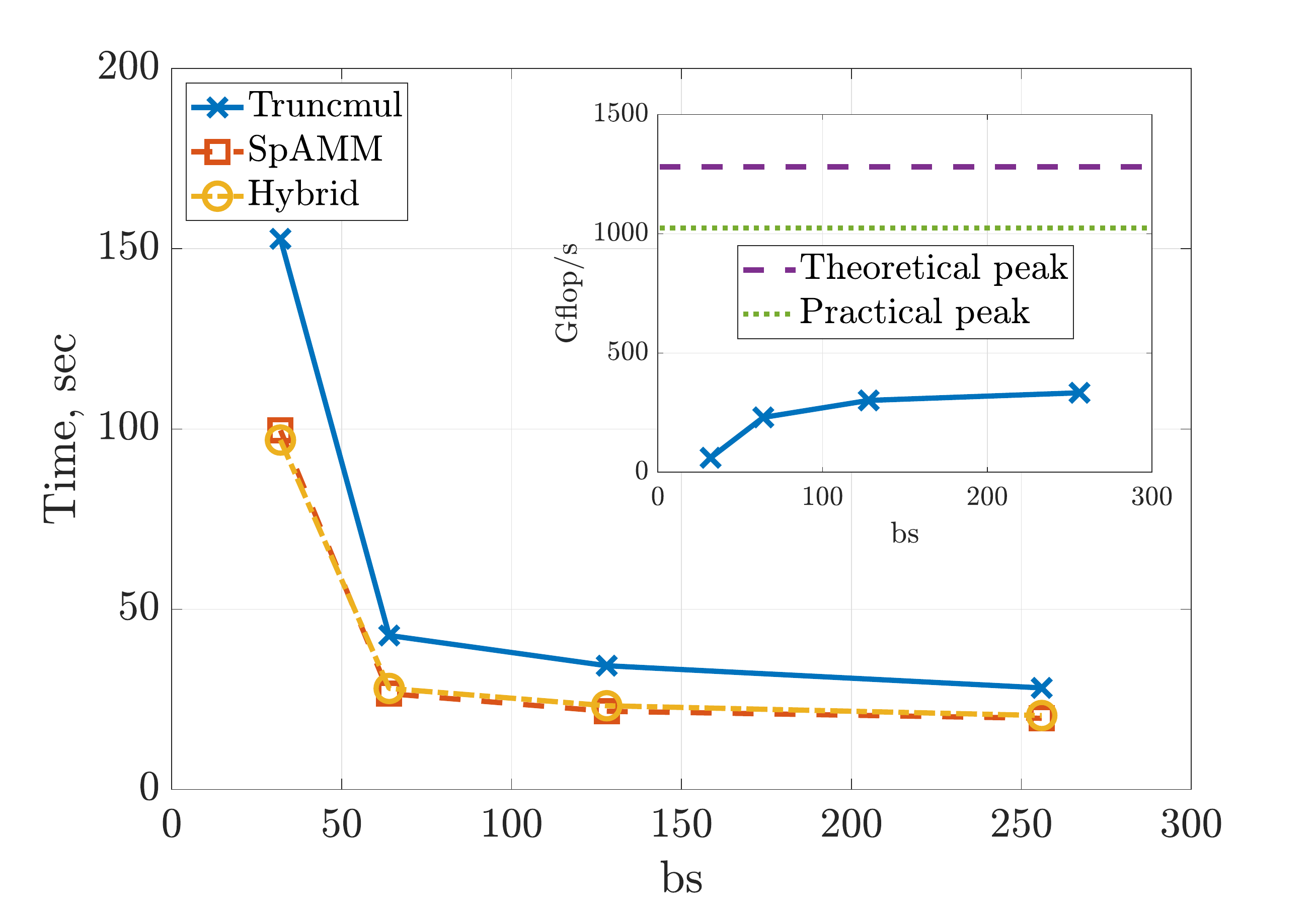}} 
\end{minipage}
\hfill
\begin{minipage}[h]{0.45\linewidth}
\center{\includegraphics[width=1\linewidth]{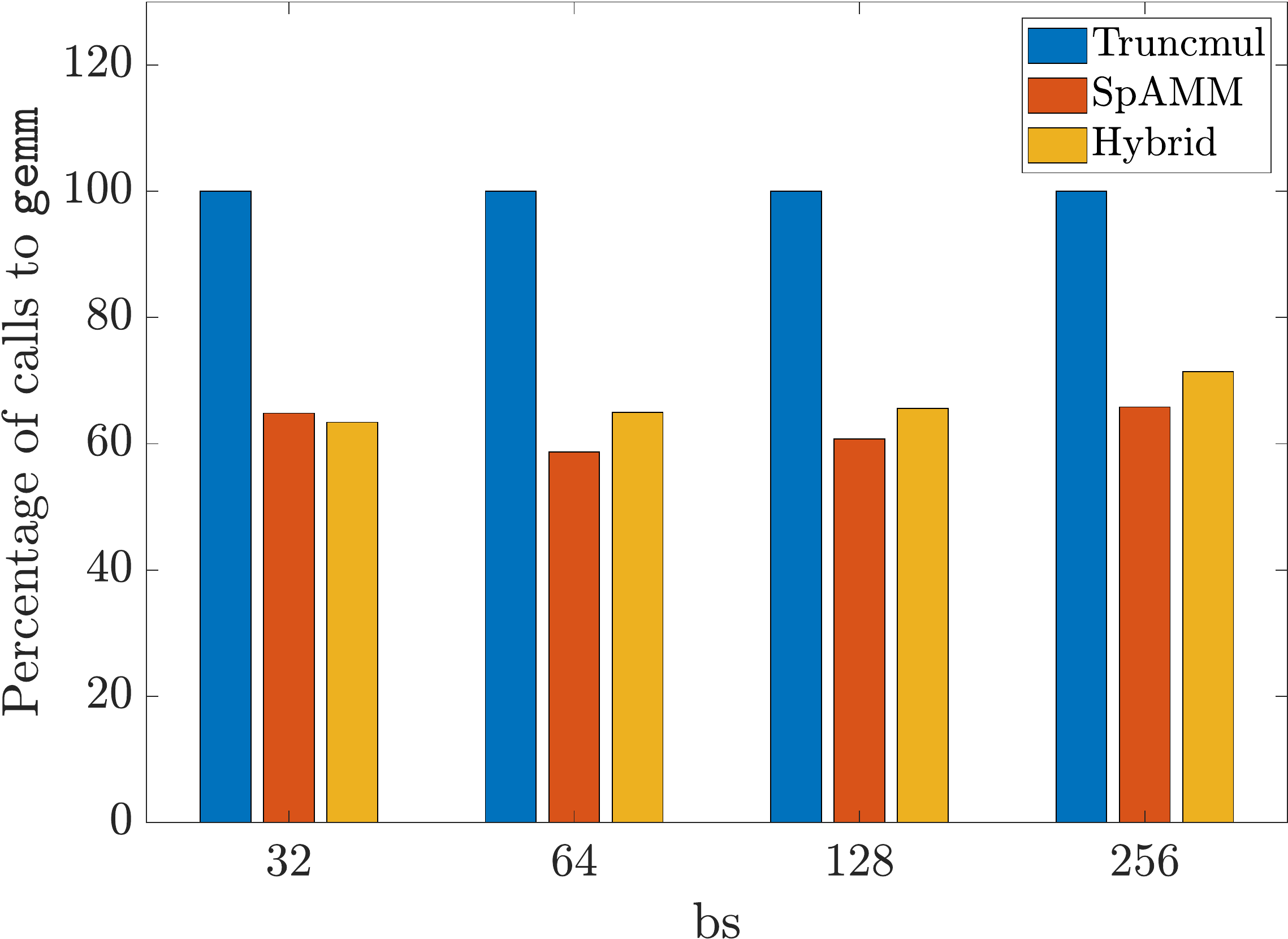}} 
\end{minipage}
\caption{Left panel: single node timings for the three methods (main plot), performance of Truncmul (subplot), target accuracy $\sigma = 10^{-6}$. Right panel: percentage of the number of calls to gemm BLAS matrix multiply routine compared to Truncmul. A matrix of size $n = 4\times 10^4$ is multiplied by itself. Block-sizes $bs = 32, 64, 128, 256$ are used and truncation is performed with $\tau = 10^{-10}$ for $bs = 32,64,128$ and $\tau = 10^{-11}$ for $bs=256$, same value for all methods}
\label{ris:model_problem_perfromance}
\end{figure*}

In the shared memory regime one cannot see how communication affects the performance of the algorithms. This becomes clear in the next test case.

\paragraph{Performance on multiple nodes}

In this benchmark we keep the computational load per node approximately constant, i.e. when doubling the matrix size, we also double the number of nodes. The system size is $n = 4 \times 10^4 \times n_{nodes},$ $\alpha = 0.005, $ task size is $2048.$ The number of nodes ranges from 1 to 64, the target accuracy is $\sigma = 10^{-6}.$ The leaf blocksize $bs$ is set to 64. We perform the tests twice and then use the average values for visualizations.    

The results are presented in Figure \ref{ris:model_problem_perfromance_multinode_ta1e-6}, left panel. As one can observe, when communication is involved, the performance growth for Truncmul with increasing number of nodes is close to linear (subplot). In terms of timing, similarly to the single node case, Truncmul is noticeable slower, then the other two (main plot). The Hybrid method gives the best timings. The curves on the main plot are not linear, but rather logarithmic. It agrees with \citep{rubensson2016locality}, where it is shown that the Chunks and Tasks implementation of sparse matrix multiplication has communication costs proportional to $\log_2(n).$ If communication is for free, then one can expect to arrive at a flat region on that plot if the matrix size is increased further, as the problem size grows simultaneously with processing power. However, communication costs cannot be excluded and therefore we observe a logarithmic behavior.   

On the right panel of Figure \ref{ris:model_problem_perfromance_multinode_ta1e-6} we present how different the numbers of calls made to the \verb|gemm| routine are. The first observation is that Truncmul generates significantly more calls than its competitors. The second observations is that the Hybrid method does generate fewer \verb|gemm| calls in this setting. At average, Truncmul generates 68 \% more calls than \textit{SpAMM} does and 74 \% more than Hybrid does. We also notice that in order to preserve accuracy, one has to decrease the truncation threshold value when the matrix size grows. For instance, going from size $4 \times 10^4$ to $256 \times 10^4$ requires to decrease $\tau$ by one order of magnitude. This is due to the hidden $\sqrt{n}$ factor inside the Frobenius norm, which is used in the methods. We also can observe that Truncmul requires smaller values of threshold compared to the other two methods. For example, for $n = 256 \times 10^4$, both \textit{SpAMM} and Hybrid require $\tau$ to be $10^{-11}$, whereas Truncmul is able to calculate the result accurately enough only with $\tau = 10^{-12}.$

\begin{figure*}[ht]
\begin{minipage}[h]{0.45\linewidth}
\center{\includegraphics[width=1\linewidth]{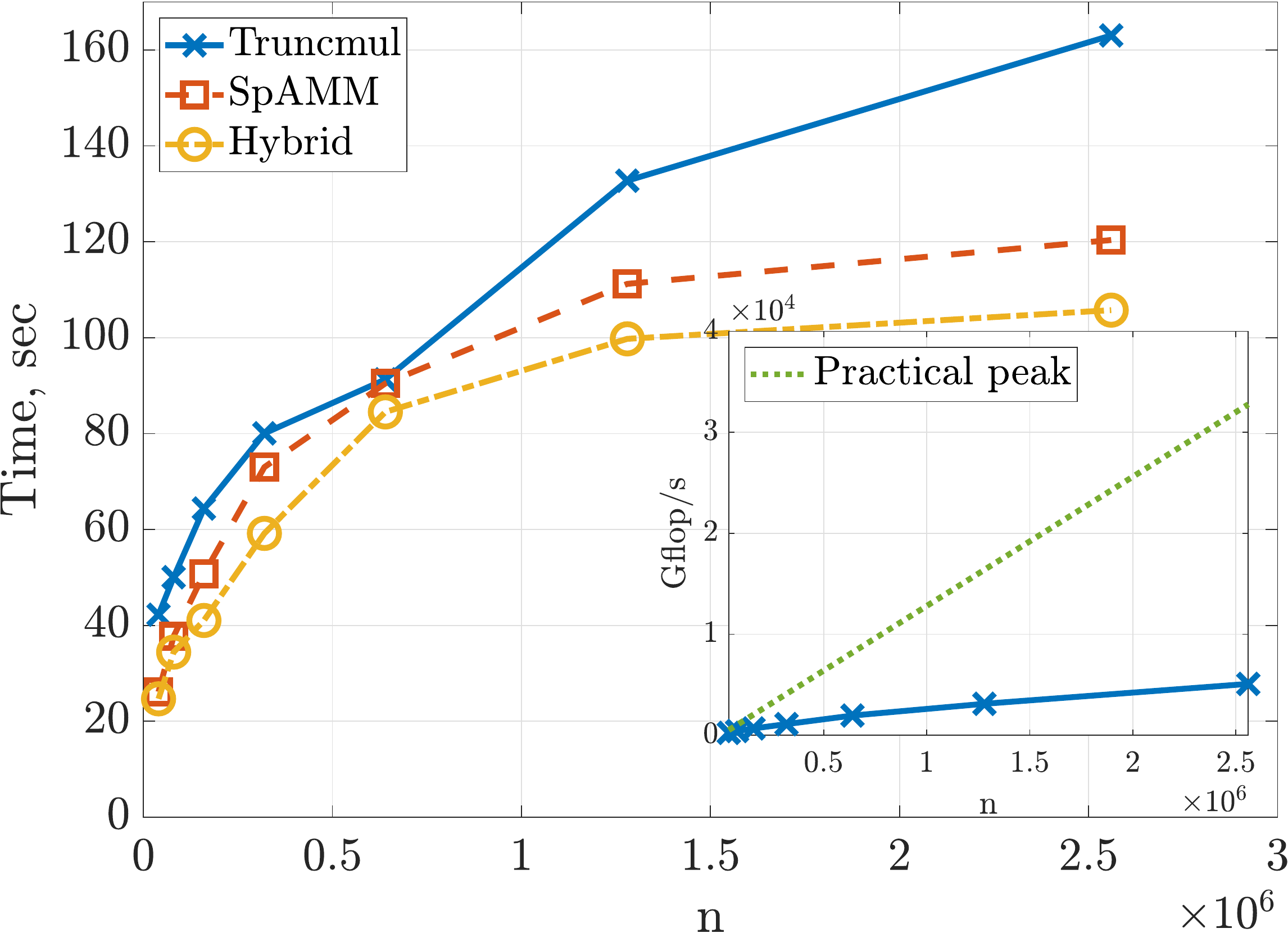}} 
\end{minipage}
\hfill
\begin{minipage}[h]{0.45\linewidth}
\center{\includegraphics[width=1\linewidth]{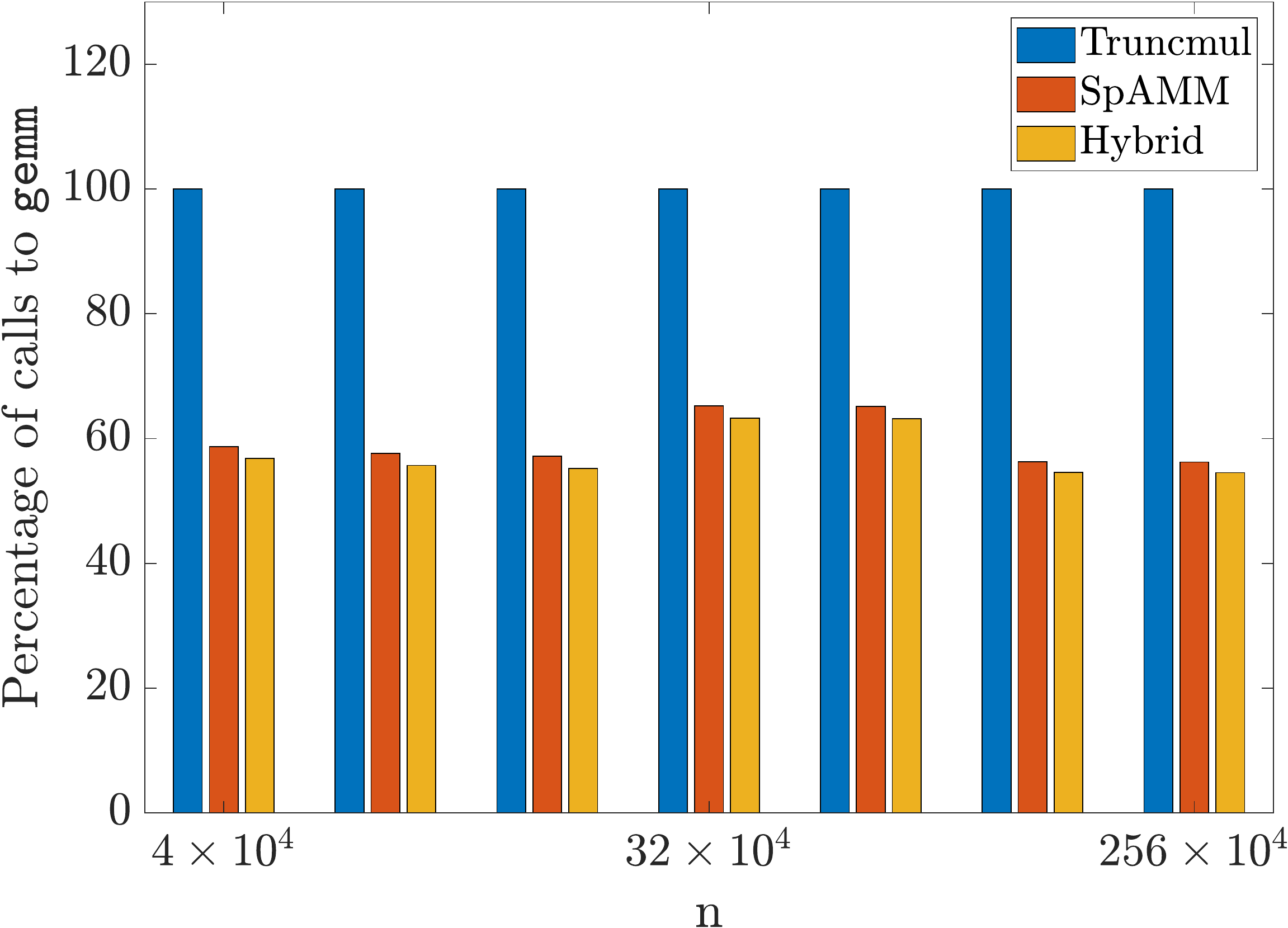}} 
\end{minipage}
\caption{Left panel: multiple node timings on the model problem for the three methods (main plot), performance of Truncmul (subplot), target accuracy $\sigma = 10^{-6}$. Right panel: percentage of the number of calls to gemm BLAS matrix multiply routine compared to Truncmul. Computational load is kept approximately the same by doubling the matrix size simultaneously with the number of nodes, $n = 4 \times 10^4 \times n_{nodes}$, block-size is 64. For $n = 4 \times 10^4$ truncation is performed with $\tau = 10^{-11}$ for Truncmul and $\tau = 10^{-10}$ for the other methods. For $n = 256 \times 10^4$ truncation is performed with $\tau = 10^{-12}$ for Truncmul and $\tau = 10^{-11}$ for the other methods }
\label{ris:model_problem_perfromance_multinode_ta1e-6}
\end{figure*}

To verify the error estimates derived in Section \ref{error_control} we plot the experimental data in Figure \ref{ris:model_problem_multinode_errors}. The error norm for all three algorithms behaves the same way: it grows proportionally to the square root of the system size when the system size grows and decreases almost proportionally to $\tau$ when this parameter is decreased. 

\begin{figure*}[ht]
\begin{minipage}[h]{0.45\linewidth}
\center{\includegraphics[width=1\linewidth]{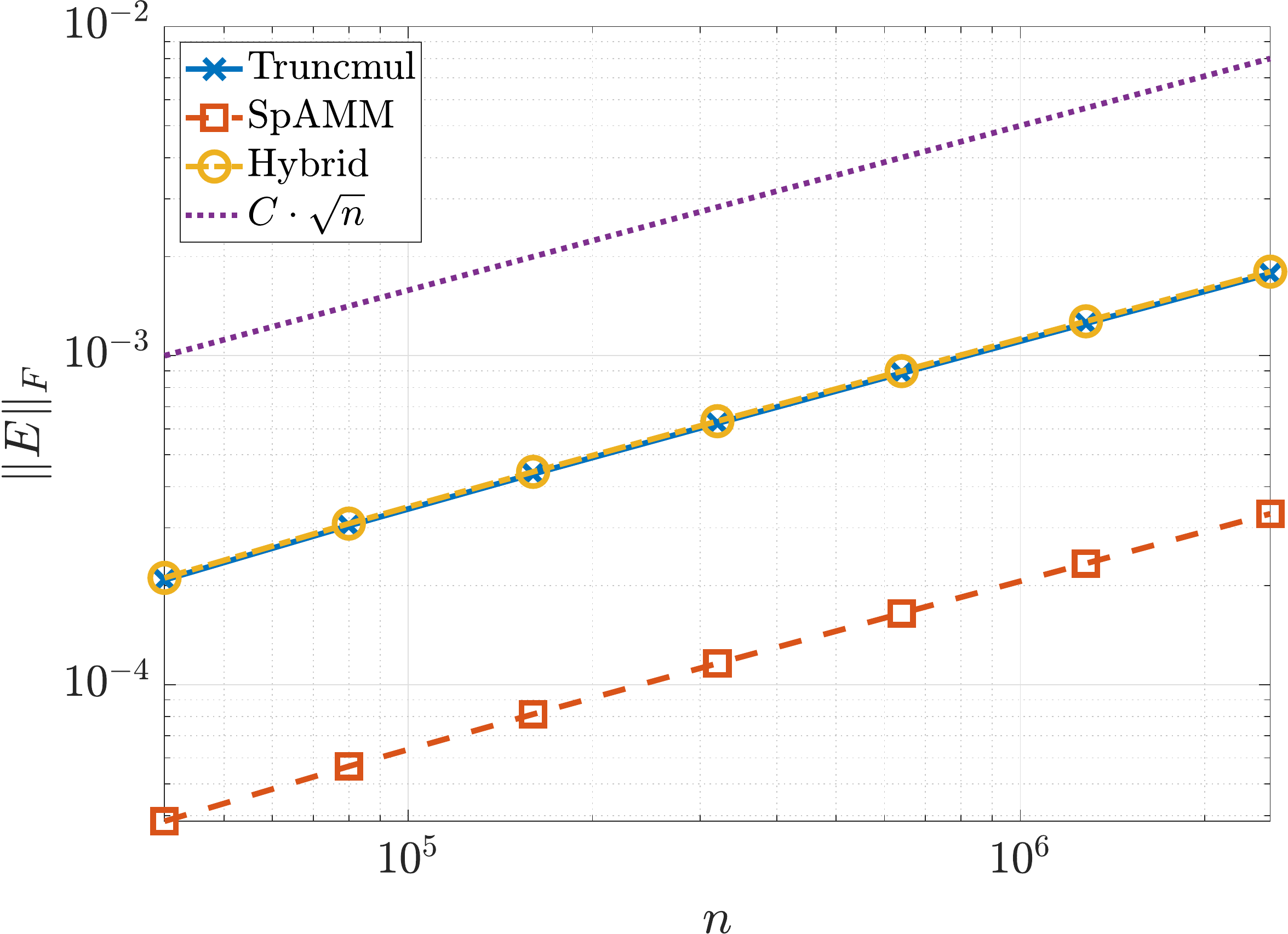}}
\end{minipage}
\hfill
\begin{minipage}[h]{0.45\linewidth}
\center{\includegraphics[width=1\linewidth]{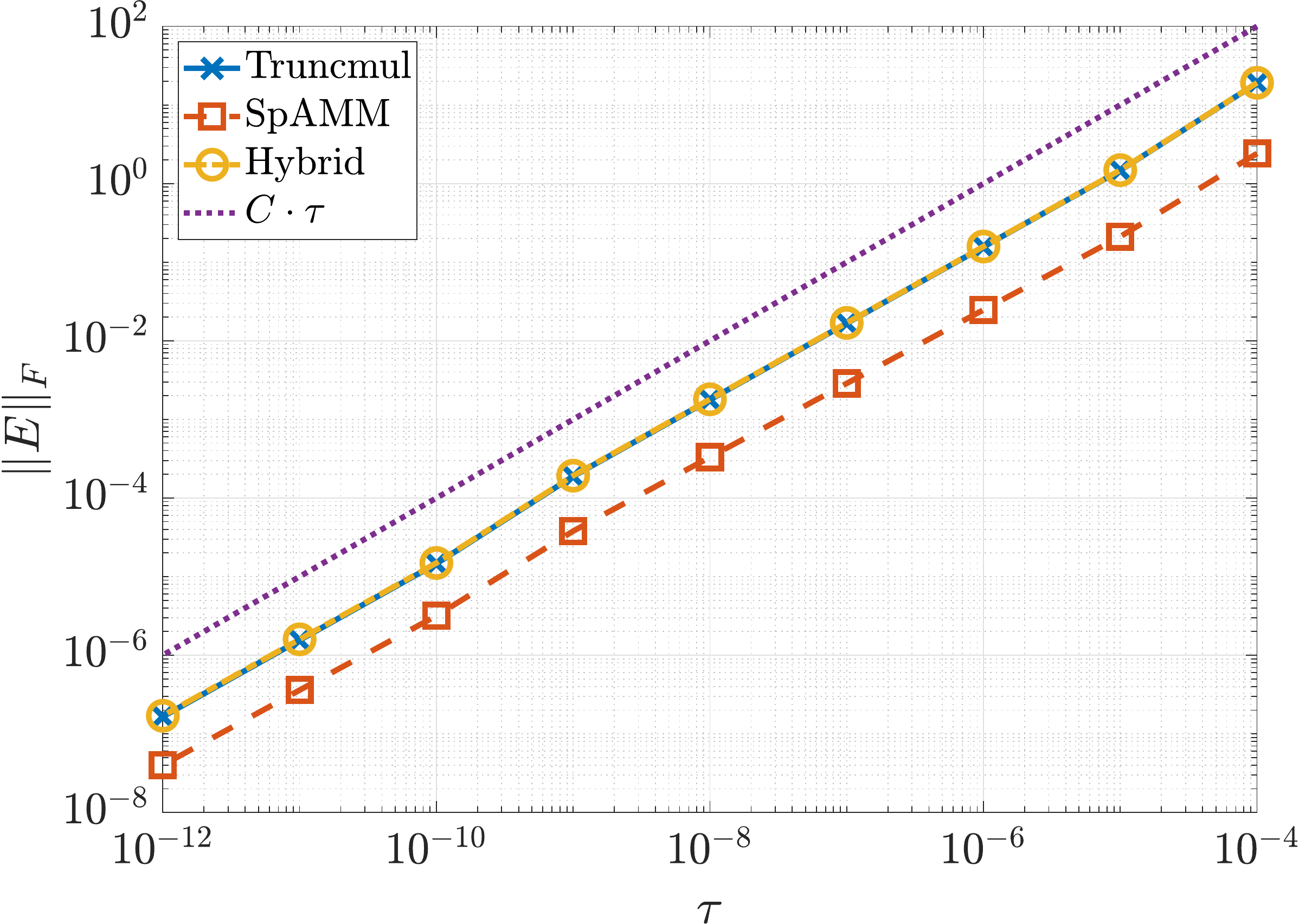}} 
\end{minipage}
\caption{Errors in the model problem. Left panel: the Frobenius norm of the error matrix as a function of $n$ for fixed $\tau = 10^{-6}.$ All lines have approximate slope of 0.5. Right panel: the Frobenius norm of the error matrix as a function of $\tau$ parameter for $n = 256 \times 10^4$. All three lines have approximate slope of one. Dotted lines are for references. }
\label{ris:model_problem_multinode_errors}
\end{figure*}

\subsection{Performance on a real problem} 

In this benchmark we use so-called basis set overlap matrices originating from space discretization of water clusters of increasing size using a Gaussian basis set. A water cluster is a set of water molecules of bulk water clustered inside a sphere of certain radius at standard conditions. The process of the geometry generation is described in \cite{rudberg2011kohn}. The overlap matrices obey the exponential decay property with respect to Euclidean distance between the centres of Gaussian basis functions, which are placed at nuclei positions. The corresponding xyz coordinates can be downloaded from \url{http://ergoscf.org/}. The same coordinate files are also used in \cite{Artemov2018parallelization}. The software used to generate the matrices is the Ergo open-source program for linear-scaling electronic structure calculations \cite{Ergo-SoftwareX-2018}, distributed under the GNU Public Licence v3 and freely obtainable at \url{http://ergoscf.org/}. The basis set used for this particular set of experiments is STO-3G.

\paragraph{Approximate weak scaling} This computational experiment is performed as follows: an overlap matrix $S$ is computed using the Ergo code for the given .xyz file and its square is used as a reference to measure errors. Similarly to the previous tests, we fix a target accuracy $\sigma = 10^{-6}$ and then the matrix is multiplied by itself not using that it is symmetric by all algorithms for the same value of truncation parameter $\tau$ ranging from $10^{-4}$ to $10^{-12}.$ The Frobenius norms of the errors in the product matrices are measured and for each algorithm we pick the largest value of the threshold such that the error norm does not exceed the target accuracy $\sigma$. We collect the statistics and do comparison of the runs made with corresponding threshold values.

The system size is increased by a factor of $\approx 2$ every time as well as the number of worker processes involved, and each process handles about 6700 atoms, i.e. the test shows an approximate weak scaling. We set 31 threads per process to perform tasks and left a single thread for communication, so a single process occupies the whole node. The matrix is split into blocks of maximum size 2048 (task size) and those blocks are handled by the leaf level library, which uses block-size 64. We perform four runs of each test. 

The results for the particular value of target accuracy $\sigma = 10^{-6}$ are presented in Figure \ref{ris:waterclusters_weak_scaling_plus_data_sent}, the left panel. As one can observe, the $\textit{SpAMM}$ algorithm cannot compete with the others almost for any sizes. We would like to point out that all three algorithms inherit logarithm-like weak scaling curve, since they all are based on the locality-aware matrix multiplication \citep{rubensson2016locality}.

\paragraph{Data movement} The main reason why the $\textit{SpAMM}$ algorithm behaves badly on a real problem is the amount of data to be moved. Since no initial truncation is applied, it has to manipulate very many small sub-matrices. The other two algorithms do employ the truncation step before starting the main procedure, so they do not have to move so much data. We collect statistics after four sets of tests and the plots representing the amount of data sent by a worker are presented in Figure \ref{ris:waterclusters_weak_scaling_plus_data_sent}, the right panel. There are two main observation one can do when looking at the plots: 1) on average, the $\textit{SpAMM}$ algorithms moves more data than its competitors and 2) the gap between the average amount and the maximal amount of the data sent for the $\textit{SpAMM}$ algorithm is about 260\%. The Truncmul algorithm and the Hybrid algorithm demonstrate better results, keeping both the average and the gap low.

\begin{figure*}[h]
\begin{minipage}[h]{0.45\linewidth}
\center{\includegraphics[width=1\linewidth]{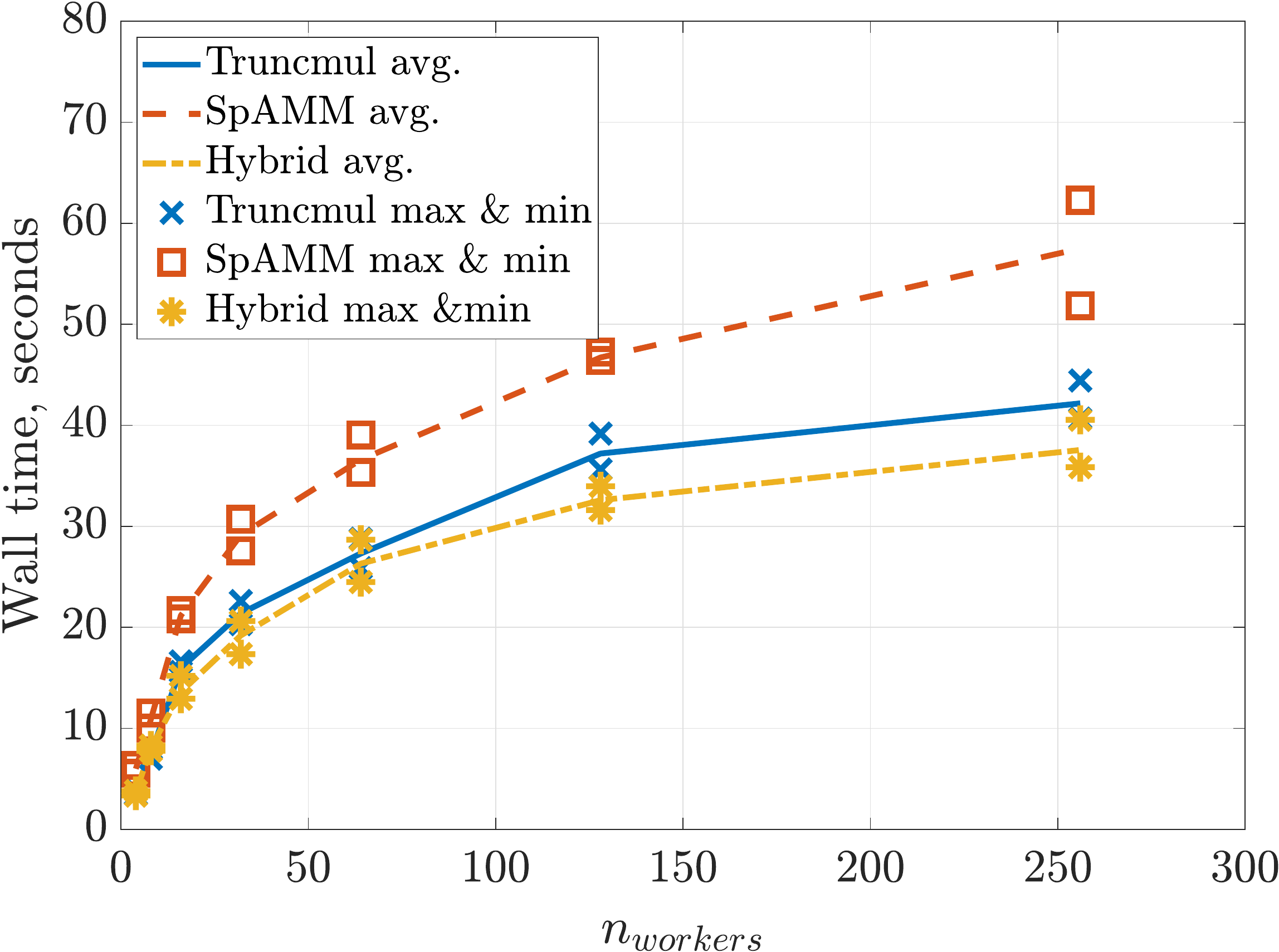}} 
\end{minipage}
\hfill
\begin{minipage}[h]{0.45\linewidth}
\center{\includegraphics[width=1\linewidth]{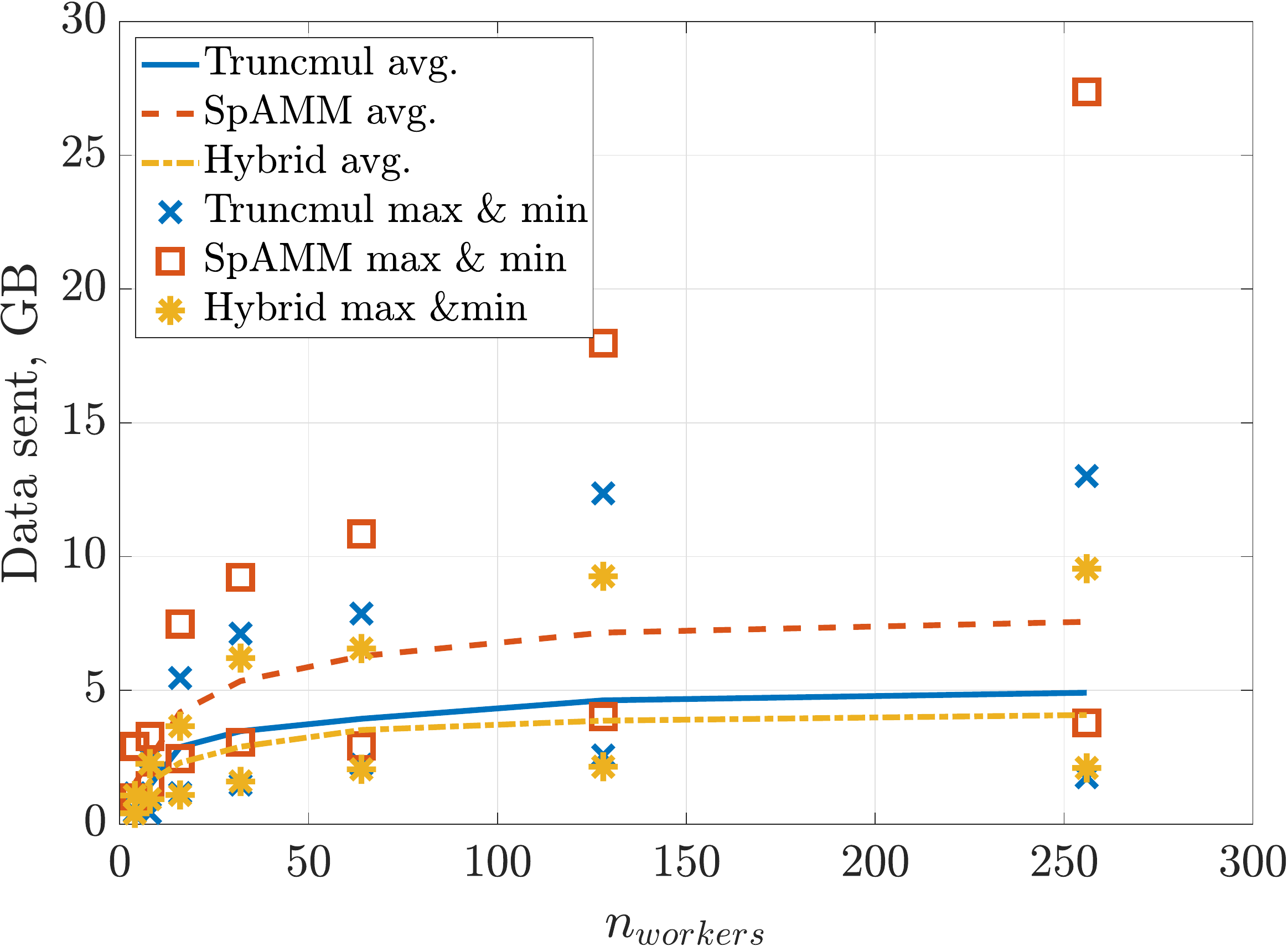}} 
\end{minipage}
\caption{Left panel: approximate weak scaling of the algorithms for water clusters of increasing size, target accuracy $\sigma = 10^{-6}$. The number of atoms per process was approximately fixed to $6.7 \times 10^3$, so that the system size is scaled up together with the number of working processes involved. Lines represent average execution time and points represent maximal and minimal execution time over all the workers. Right panel: average amount of data sent by a worker when executing the corresponding algorithm for water clusters of increasing size, $\sigma = 10^{-6}$. Points represent maximal and minimal amounts over all the workers.}
\label{ris:waterclusters_weak_scaling_plus_data_sent}
\end{figure*}

\paragraph{Errors} In order to verify the error estimates obtained in Section \ref{error_control}, we plot the Frobenius norms of the error matrices as functions of the system size $n$ (or, equivalently, as functions of the number of worker processes, since we are in a weak scaling setting), and then we fix the number of processes and plot the norms as functions of $\tau.$ The corresponding plots are presented in Figure \ref{ris:waterclusters_weak_scaling_errors}. The left panel shows the case when the $\tau$ parameter is fixed. In this case, the error grows as the square root of $n$, since the slopes of all the three lines are close to $0.5.$ The right panel demonstrates the second setting, then the systems size is fixed, but $\tau$ varies. In this case, one can observe that the error matrix norm depends almost linearly on $\tau$, which agrees with Theorems \ref{theorem_truncmul_new}, \ref{theorem_spamm_new} and \ref{theorem_tuncated_spamm_new}. The slopes of the lines are $\approx 0.99$ for the multiplication of truncated matrices, $\approx 0.96$ for the $\textit{SpAMM}$ algorithm and $\approx 0.97$ for the Hybrid approach. In practice, for a sufficiently large matrix, the dependency between the error matrix norm and the $\tau$ parameter becomes virtually linear.

\begin{figure*}[h]
\begin{minipage}[h]{0.45\linewidth}
\center{\includegraphics[width=0.91\linewidth]{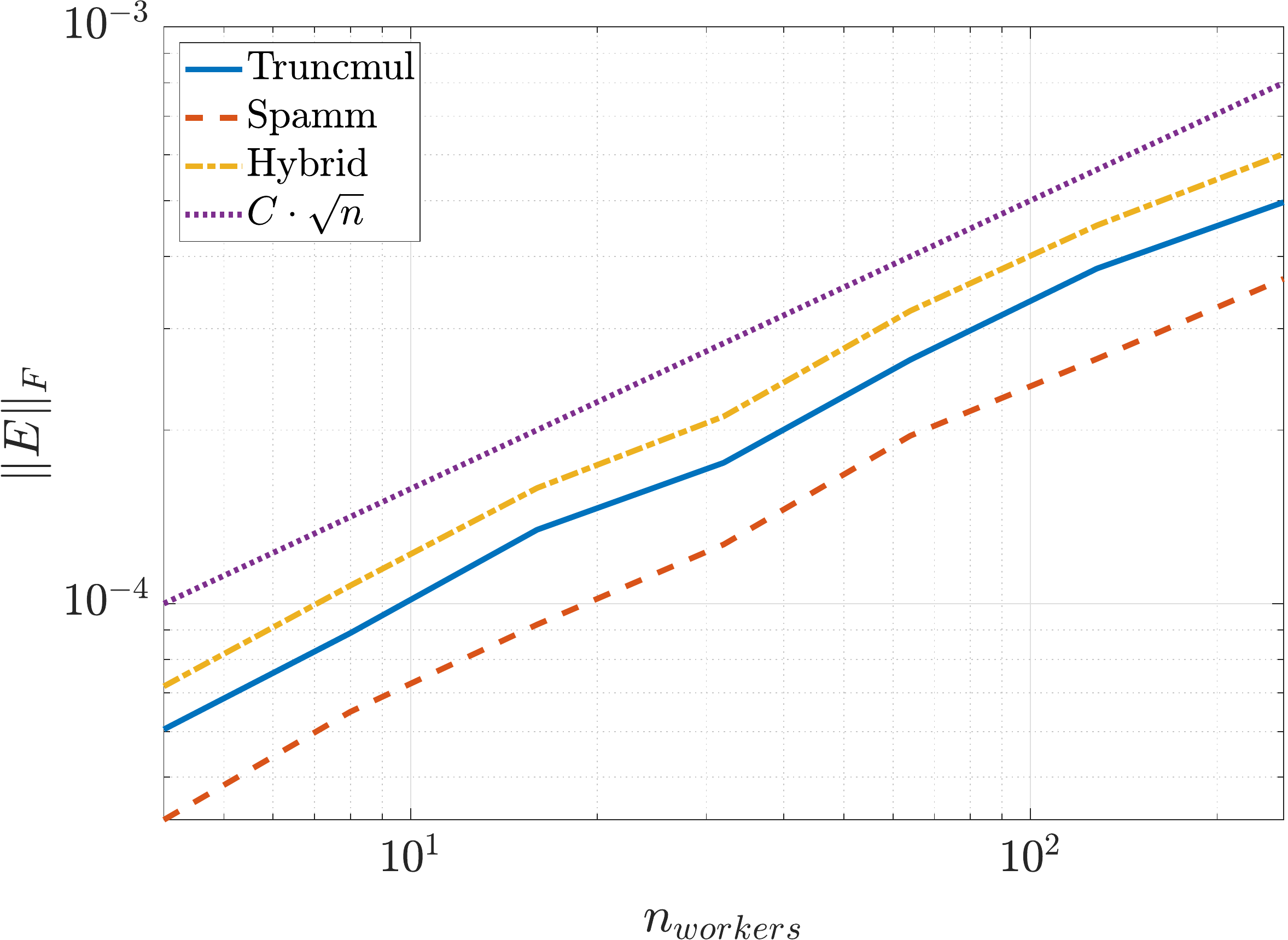}} 
\end{minipage}
\hfill
\begin{minipage}[h]{0.45\linewidth}
\center{\includegraphics[width=0.91\linewidth]{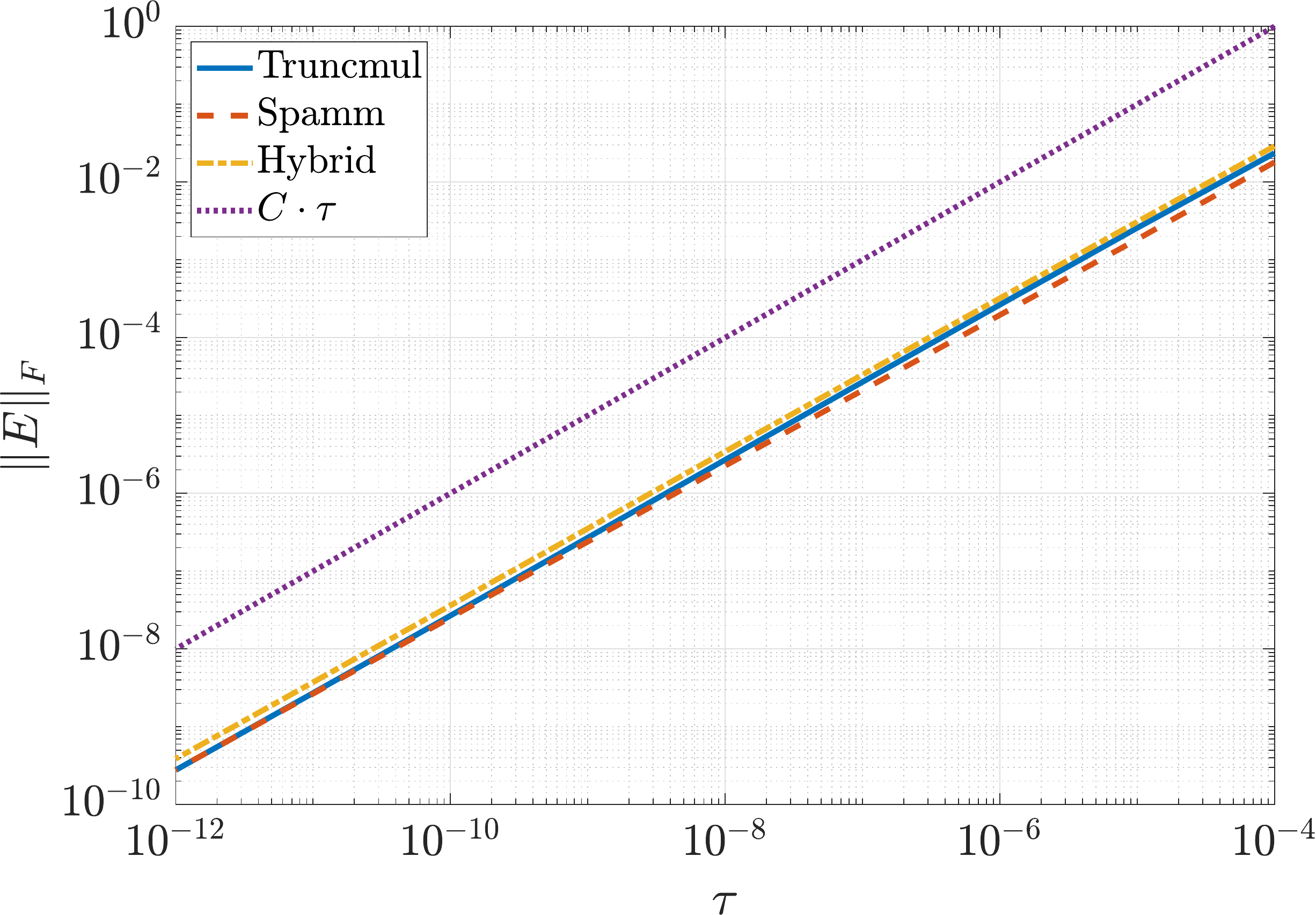}} 
\end{minipage}
\caption{Errors in the real problem. Left panel: the Frobenius norms of error matrices as functions of $n$ or $n_{workers}$ for fixed truncation threshold $\tau = 10^{-6}$. The number of atoms per process is approximately fixed to $6.7 \times 10^3$ so that the system size is scaled up together with the number of working processes involved. The slope of the dotted line (reference) is 0.5. Right panel: the Frobenius norms of error matrices as functions of truncation threshold $\tau$ for fixed $n_{workers} = 64$ or $n = 432498$ atoms, which corresponds to matrix size 1009162. The slopes are $\approx 0.99$ for multiplication of truncated matrices,  $\approx 0.96$ for $\textit{SpAMM}$ and $\approx 0.97$ for the Hybrid method, the dotted line is for reference.}
\label{ris:waterclusters_weak_scaling_errors}
\end{figure*}

\section{Discussion and concluding remarks} \label{conclusion} 

We developed three implementations of approximate multiplication of sparse matrices with exponential decay using the Chunks and Tasks programming model. The considered methods are multiplication of truncated matrices, the $\textit{SpAMM}$ algorithm and their combination, referred to as 'hybrid'. We also developed a new leaf level library, which exploits hierarchical block-sparse representation and is competitive with the previously used block-sparse library in terms of performance, especially for matrices with low fill-in. 

We also derived the asymptotic behavior of the absolute error norm and demonstrated that if $\textit{SpAMM}$ is applied to truncated matrices with exponential decay, then this extra truncation step does not qualitatively change the behavior of the Frobenius norm of the error matrix. When testing on matrices coming from quantum chemistry problems, a drawback of the $\textit{SpAMM}$ algorithm shows up. It performs poorly due to large amounts of data to be moved. The proposed hybrid approach reduces the communication significantly and performs better.

A strong point of our implementations is that it utilizes a work-stealing approach for load balancing and does not require to scatter matrices over the processes using random permutations. The permutation approach is used, for instance, in the NTpoly code \cite{dawson2018massively}, as well as in the SpSUMMA method \cite{buluc2008challenges}, \cite{bulucc2012parallel}. If matrices are permuted, then their localization structure is lost and cannot be exploited. As it is shown in \citep{rubensson2016locality}, multiplication routine exploiting locality features and utilizing work-stealing approach to load balancing demonstrates superior weak scaling performance w.r.t. the SpSUMMA algorithm. As for the 3D multiplication method, asymptotically it has the same behavior as our implementation, but in practice one has to go to much larger matrices to see that. Our implementation also does not rely on collective communications, which require synchronization and affect performance and might create some other problems such as deadlocks, whereas it is heavily used in the mentioned approaches.

We conclude that the $\textit{SpAMM}$ algorithm for multiplication of nearly sparse matrices with exponential decay with respect to a distance function can be successfully used in a shared memory environment, but it is not the best choice for a distributed environment. Adding a truncation step before application of $\textit{SpAMM}$ improves the behavior of the algorithm drastically. This hybrid approach does have the same error behavior as the original $\textit{SpAMM}$ algorithm and the multiplication of truncated matrices, but performs better due to reduced communication.

\section{Acknowledgement}
Support from the Swedish national strategic e-science research program
(eSSENCE) is gratefully acknowledged. Computational resources were
provided by the Swedish National Infrastructure for Computing (SNIC)
at the PDC Center for High Performance Computing at the KTH Royal
Institute of Technology in Stockholm.

We thank Assoc. Prof. Emanuel H. Rubensson and Prof. Maya Neytcheva for their fruitful comments that helped to improve the manuscript.

\appendix
\section{Proofs of Lemmas and Theorems}\label{appendix_with_proofs}

\begin{proof}[Proof of Lemma \ref{lemma_sum_insignificant}]

One should consider two cases, in which insignificant elements can exist: $d(i,j) < r_{\varepsilon} = \frac{1}{\alpha} \ln \frac{c}{\varepsilon} $ and $d(i,j) \geq r_{\varepsilon}.$ This is due to inequality sign in \eqref{eq:exp_decay_single_sequence}. Note that it is implicitly assumed that $\varepsilon \leq c.$

Let us first consider the latter case, i.e. $d_n(i,j) \geq r_{\varepsilon}.$ We directly use the exponential decay property \eqref{eq:exp_decay_single_sequence} here. Then
\begin{align*} \nonumber
\sum\limits_{i,j:\, d_n(i,j) \geq r_{\varepsilon}} & | [C_n]_{i,j} |^2 \leq c^2 \sum\limits_{i,j:\, d_n(i,j) \geq r_{\varepsilon}}  e^{-2\alpha d_n(i,j)} \\ &= 
c^2 \sum\limits_{j}\sum\limits_{i:\,\, d_n(i,j) \geq r_{\varepsilon}} e^{-2 \alpha d_n(i,j)} \\
& \leq c^2 \sum\limits_{j} \sum\limits_{r = r_{\varepsilon} + 1}^{\infty} \left( | N_{d_n}(j,r) | - |N_{d_n}(j,r-1)| \right) e^{-2 \alpha (r-1)} \\
& \leq c^2 \sum\limits_{j} \sum\limits_{r = r_{\varepsilon} + 1}^{\infty} | N_{d_n}(j,r) |  e^{-2 \alpha (r-1)} \\
& \leq c^2 \sum\limits_{j} \sum\limits_{r = r_{\varepsilon} + 1}^{\infty} \gamma r^{\beta} e^{-2 \alpha (r-1)} \leq c^2 n \sum\limits_{r = r_{\varepsilon} + 1}^{\infty} \gamma r^{\beta} e^{-2 \alpha (r-1)} .
\end{align*}

One can set $p = r - r_{\varepsilon} - 1,$ then 
\begin{equation} \label{series_insignificant}
\begin{split}
c^2 n \sum\limits_{r = r_{\varepsilon} + 1}^{\infty} \gamma r^{\beta} e^{-2 \alpha (r-1)} &= c^2 n \sum\limits_{p = 0}^{\infty} \gamma (p + r_{\varepsilon} + 1)^{\beta} e^{-2 \alpha (p + r_{\varepsilon})} \\ & = c^2 n e^{-2 \alpha r_{\varepsilon}} \sum\limits_{p = 0}^{\infty} \gamma (p + r_{\varepsilon} + 1)^{\beta} e^{-2 \alpha p}
\end{split} 
\end{equation}

The series in \eqref{series_insignificant} converges due to d'Alembert test \citep{rudin1976principles}. Let $\nu$ be the sum of that series. It is a constant independent of $n$. Note that $e^{-2\alpha r_{\varepsilon}} = e^{-2 \alpha \frac{1}{\alpha} \ln \frac{c}{\varepsilon}} = \frac{\varepsilon^2}{c^2},$ thus
\begin{equation} \nonumber
\sum\limits_{i,j:\, d_n(i,j) \geq r_{\varepsilon}} | [C_n]_{i,j} |^2 \leq n \varepsilon^2 \nu.
\end{equation} 

Since $\nu$ is independent of $n$, $n = O(n)$ and $\lim\limits_{\varepsilon \rightarrow 0} \frac{\varepsilon^2}{\varepsilon^2} = 1$, then
\begin{equation} \label{sum_insignificant_outside_re}
\sum\limits_{i,j:\, d_n(i,j) \geq r_{\varepsilon}} | [C_n]_{i,j} |^2 = \begin{cases} O(n), & n \longrightarrow \infty, \\ O(\varepsilon^2), & \varepsilon \longrightarrow 0. \end{cases} 
\end{equation}

Now one has to treat the other part, i.e. the elements, for which the distance functions is less than $r_{\varepsilon},$ using the fact that the elements are smaller than $
\varepsilon$ in magnitude.  This can be done as follows:
\begin{equation} \nonumber
\begin{split}
\sum\limits_{\substack{i,j:~ d_n(i,j) < r_{\varepsilon} \\ | [C_n]_{i,j} | \leq \varepsilon}} | [C_n]_{i,j} |^2 & \leq \sum\limits_{i,j:~ d_n(i,j) \in [0, r_{\varepsilon})} \varepsilon^2 = \varepsilon^2 \sum\limits_j \sum\limits_{i:~ d_n(i,j) \in [0, r_{\varepsilon})} 1 \\
& = \varepsilon^2 \sum\limits_j |N_{d_n}(j,r_{\varepsilon})|  \leq \varepsilon^2 \sum\limits_{j} \gamma \left( r_{\varepsilon} \right)^{\beta} \\
& \leq \varepsilon^2 n \gamma \left( r_{\varepsilon} \right)^{\beta} = \varepsilon^2 n \gamma \left(\frac{1}{\alpha} \ln\frac{c}{\varepsilon} \right)^{\beta}.
\end{split}
\end{equation}

Since $\alpha, \beta, \gamma$ and $c$ are constants independent of $n$, $o(x) = O(x)$ for any $x$ and 
\begin{equation} \nonumber
\lim_{\varepsilon \rightarrow 0} \frac{\varepsilon^2 \gamma \left(\frac{1}{\alpha} \ln\frac{c}{\varepsilon} \right)^{\beta}}{\varepsilon^p} = 0,\, \forall p < 2,
\end{equation} one concludes that 
\begin{equation} \label{sum_insignificant_within_re}
\sum\limits_{\substack{i,j:~ d_n(i,j) < r_{\varepsilon} \\ | [C_n]_{i,j} | \leq \varepsilon}} | [C_n]_{i,j} |^2  = \begin{cases} O(n), & n \longrightarrow \infty, \\ O(\varepsilon^p),\, \forall p < 2, & \varepsilon \longrightarrow 0. \end{cases}
\end{equation}

Combining together \eqref{sum_insignificant_outside_re} and \eqref{sum_insignificant_within_re}, we obtain 
\begin{equation} \nonumber
\sum\limits_{i,j:\, | [C_n]_{i,j} | \leq \varepsilon} | [C_n]_{i,j} |^2 = \begin{cases} O(n), & n \longrightarrow \infty, \\ O(\varepsilon^p),\, \forall p < 2, & \varepsilon \longrightarrow 0. \end{cases}
\end{equation}
\end{proof}

\begin{proof}[Proof of Theorem \ref{theorem_truncmul_new}]

Any element of a matrix product is computed as follows:
\begin{equation}
[C_n]_{i,j} = \sum\limits_{k=1}^{n} [A_n]_{i,k}[B_n]_{k,j}, \label{scalar_product}
\end{equation} which is a scalar product of the $i$-th row of $A_n$ and the $j$-th column of $B_n$. At the same time, any element of the product of the truncated matrices $\tilde{A}_n$ and $\tilde{B}_n$ is computed as follows:
\begin{equation} \nonumber
[\tilde{C}_n]_{i,j} = \sum\limits_{k=1}^{n}[\phi_n]_{ikj}, 
\end{equation}
\begin{equation} \nonumber
[\phi_n]_{ikj} = \begin{cases} 0, &\text{if } |[A_n]_{i,k}| < \tau\,\, \text{or} \,\, |[B_n]_{k,j}| < \tau , \\ [A_n]_{i,k}[B_n]_{k,j} &\text{otherwise}. \end{cases}
\end{equation} Then any element of the error matrix $E_n = C_n - \tilde{C}_n$ has an absolute value 
\begin{equation} \nonumber
|[E_n]_{i,j}| = \left|\sum\limits_{k=1}^{n} \left( [A_n]_{i,k}[B_n]_{k,j} - [\phi_n]_{ikj} \right)\right|,\,\,\forall i,j = 1,\ldots,n.
\end{equation} This quantity attains zero if no information is lost when computing a given element, i.e. no elements are truncated to zero, and attains its maximum when all information is lost when performing multiplication, i.e. $[\phi_n]_{ikj} = 0,\,\,k = 1,\ldots,n$. The worst-case scenario is when every element in the error matrix attains its maximum, i.e. $\tilde{C}_n = 0$ and thus $E_n = C_n.$

Let us consider the matrix $C_n = A_n B_n$. We have
\begin{equation} \label{c_trunc_mul_element_abs_value}
|[C_n]_{i,j}| = \left| \sum\limits_{k=1}^{n} [A_n]_{i,k}[B_n]_{k,j} \right| \leq \sum\limits_{k=1}^{n}|[A_n]_{i,k}||[B_n]_{k,j}|. 
\end{equation}

Since both $A_n$ and $B_n$ possess on the exponential decay property with respect to a common distance function $d_n(i,j),$ by Theorem 4 in \citep{Rubensson2018localized} for any $\varepsilon > 0$ both matrices have at most $\kappa$ significant elements (i.e. greater than $\varepsilon$ in magnitude) in every row or column, where $\kappa$ is a constant independent of $n$. Let us use $\varepsilon = \tau$. 

When estimating the absolute value in \eqref{c_trunc_mul_element_abs_value}, three possible types of summands are to be taken into account:

\begin{enumerate} 
\itemsep0.5em
\item  $|[A_n]_{i,k}| \geq \tau,\,\, |[B_n]_{k,j}| < \tau \Rightarrow 0 \leq |[A_n]_{i,k}| |[B_n]_{k,j}| \leq c \tau;$

\item $|[A_n]_{i,k}| < \tau,\,\, |[B_n]_{k,j}| \geq \tau \Rightarrow 0 \leq |[A_n]_{i,k}| |[B_n]_{k,j}| \leq c \tau;$

\item $|[A_n]_{i,k}| < \tau,\,\, |[B_n]_{k,j}| < \tau \Rightarrow 0 \leq |[A_n]_{i,k}| |[B_n]_{k,j}| < \tau^2.$
\end{enumerate} The case where both multipliers are greater than $\tau$ in magnitude is not included since it falls outside the worst-case scenario. The estimation of the product of the absolute values for the first two types is due to exponential decay property, $c = \max(c_1, c_2).$ There are not more than $2\kappa$ summands of type 1 and 2 for any element (Theorem 4 from \citep{Rubensson2018localized}). The rest summands are of type 3. We split the index set $I_n = \{1,\ldots,n\}$ into two sets, ${I_n}_2 = \{k: |[A_n]_{i,k}| |[B_n]_{i,k}| ~\text{are of type 3} \}$ and ${I_n}_1 = I_n \setminus {I_n}_2.$ 

Let $P$ be a permutation of the index set such that $P({I_n}_1) =  \{1,\ldots, 2\kappa\}$ and $P({I_n}_2) = \{2\kappa + 1,\ldots,n\}$. Let us denote $P(i) = \bar{i},\,\forall i = 1,\ldots, n.$ The permutation $P$ transforms the matrices $A_n$ and $B_n$ accordingly: $[\bar{A}_n]_{\bar{i},\bar{j}} = [A_n]_{i,j}, [\bar{B}_n]_{\bar{i},\bar{j}} = [B_n]_{i,j},\,\forall i,j = 1,\ldots,n.$ These matrices $\bar{A}_n$ and $\bar{B}_n$ preserve the decay properties, but with respect to another distance function $\bar{d}_n(\cdot,\cdot)$ such that $\bar{d}_n(\bar{i},\bar{j}) = d_n(i,j),\,\forall i,j = 1,\ldots, n.$ Then

\begin{align*} \nonumber
|[E_n]|_{i,j}  = |[C_n]|_{i,j} & \leq \sum\limits_{k \in {I_n}_1} |[A_n]_{i,k}| |[B_n]_{k,j}| + \sum\limits_{k \in {I_n}_2} |[A_n]_{i,k}| |[B_n]_{k,j}| \\
& = \underbrace{\sum\limits_{\bar{k} = 1}^{2\kappa} |[\bar{A}_n]_{\bar{i},\bar{k}}| |[\bar{B}_n]_{\bar{k},\bar{j}}|}_{S_1} + \underbrace{\sum\limits_{\bar{k} = 2\kappa+1}^{n} |[\bar{A}_n]_{\bar{i},\bar{k}}| |[\bar{B}_n]_{\bar{k},\bar{j}}|}_{S_2} \\
& = S_1 + S_2.
\end{align*} We consider next the partial sums $S_1$ and $S_2$ separately, starting with $S_1:$
\begin{equation} \label{S_1_truncmul_done}
S_1 = \sum\limits_{\bar{k} = 1}^{2\kappa} |[\bar{A}_n]_{\bar{i},\bar{k}}| |[\bar{B}_n]_{\bar{k},\bar{j}}| \leq 2\kappa c \tau = \delta_1 \tau. 
\end{equation} 

To estimate $S_2$, one should take into account that all summands in this expression are of type 3, i.e. both $|[\bar{A}_n]_{\bar{i},\bar{k}}| < \tau,$ $|[\bar{B}_n]_{\bar{k},\bar{j}}| < \tau$ $\forall \bar{k} = 2\kappa + 1,
\ldots, n$: 
\begin{equation} \label{S_2_trunc_mul_almost_done}
\begin{split}
S_2 & = \sum\limits_{\bar{k} = 2\kappa+1}^{n} \underbrace{|[\bar{A}_n]_{\bar{i},\bar{k}}|}_{< \tau} |[\bar{B}_n]_{\bar{k},\bar{j}}| < \tau \sum\limits_{\bar{k} = 2\kappa+1}^{n} |[\bar{B}_n]_{\bar{k},\bar{j}}|  \\ 
&\leq c_2 \tau \sum\limits_{\bar{k} = 2\kappa+1}^{n} e^{-\alpha  \bar{d}_n(\bar{k},\bar{j})}.
\end{split} 
\end{equation}

It can be shown that the last sum in \eqref{S_2_trunc_mul_almost_done} is bounded by a constant independent of $n:$
\begin{equation} \nonumber
\begin{split}
\sum\limits_{\bar{k} = 2\kappa+1}^{n} e^{-\alpha \bar{d}_n(\bar{k},\bar{j})} & \leq \sum\limits_{k = 1}^{n} e^{-\alpha \bar{d}_n(\bar{k},\bar{j})} \\ 
& \leq \sum\limits_{r=1}^{\infty}\left(|N_{\bar{d}_n}(\bar{j},r)| - |N_{\bar{d}_n}(\bar{j},r-1)| \right)e^{-\alpha(r-1)} \\
& \leq \sum\limits_{r=1}^{\infty} \left( |N_{\bar{d}_n}(\bar{j},r)| \right)e^{-\alpha(r-1)} \leq \sum\limits_{r=1}^{\infty} \gamma r ^{\beta}e^{-\alpha(r-1)},
\end{split}
\end{equation} where the last series is a convergent one, which can be demonstrated with d'Alembert test \citep{rudin1976principles}. Let $\sigma$ denote its sum, then
\begin{equation} \label{S_2_truncmul_done}
S_2 < c_2 \tau \sigma = \delta_2 \tau.
\end{equation} 

By combining \eqref{S_1_truncmul_done} and \eqref{S_2_truncmul_done}, we obtain
\begin{equation} \nonumber
| [E_n]_{i,j} | < \delta_1 \tau + \delta_2 \tau = (\delta_1 + \delta_2) \tau = \delta \tau,\,\,\forall i,j = 1,\ldots,n, \nonumber
\end{equation}

\begin{equation} \nonumber
| [E_n]_{i,j} | = O(\tau) \,\,\forall i,j = 1,\ldots,n.
\end{equation}

In the worst-case scenario, $\tilde{C}_n = 0$ and thus $E_n = C_n.$ We know by Theorem 5 from \citep{Rubensson2018localized} that $C_n$ satisfies the exponential decay property with respect to the same distance function as $A_n$ and $B_n.$ Thus, by Theorem 4 from \citep{Rubensson2018localized} $C_n$ has at most $O(n)$ significant elements $\forall \varepsilon > 0$. In order to estimate the Frobenius norm of the error matrix, we use Lemma \ref{lemma_sum_insignificant} to compute partial sum for all insignificant elements with $\varepsilon = \tau$ and the result derived above to the rest of elements, i.e. significant ones:
\begin{equation} \nonumber
\begin{split}
\| E_n \|^2_F & = \| C_n \|^2_F = \sum\limits_{i,j: |[C_n]_{i,j}| \leq \tau}  |[C_n]_{i,j}|^2 + \sum\limits_{i,j: |[C_n]_{i,j}| > \tau}  |[C_n]_{i,j}|^2 \\
& =  \begin{cases} O(n), & n \longrightarrow \infty, \\ O(\varepsilon^p),\, \forall p < 2, & \varepsilon \longrightarrow 0. \end{cases}
\end{split}
\end{equation} By taking the square root from both sides of the previous relation, we arrive at
\begin{equation} \nonumber
\| E_n \|_F = \begin{cases} O(n^{1/2}), & n \longrightarrow \infty, \\ O(\varepsilon^{p/2}),\, \forall p < 2, & \varepsilon \longrightarrow 0. \end{cases}
\end{equation}
\end{proof}

\begin{proof}[Proof of Theorem \ref{theorem_spamm_new}]
Any element of a matrix product is computed as:
\begin{equation} \nonumber
[C_n]_{i,j} = \sum\limits_{k=1}^{n} [A_n]_{i,k}[B_n]_{k,j}, 
\end{equation} which is nothing but a scalar product of the $i$-th row of $A_n$ and the $j$-th column of $B_n$. At the same time, any element of a $\textit{SpAMM}$ product is computed as follows:
\begin{equation} \nonumber
[\bar{C}_n]_{i,j} = \sum\limits_{k=1}^{n}[\chi_n]_{ikj},\,\,
\end{equation}
\begin{equation} \nonumber
[\chi_n]_{ikj} = \begin{cases} 0, &\text{if } |[A_n]_{i,k}| |[B_n]_{k,j}| < \tau, \\ [A_n]_{i,k}[B_n]_{k,j} &\text{otherwise}. \end{cases}
\end{equation} Then any element of the error matrix $E_n = C_n - \bar{C}_n$ has an absolute value 

\begin{equation} \nonumber
|[E_n]_{i,j}| = \left|\sum\limits_{k=1}^{n} \left( [A_n]_{i,k}[B_n]_{k,j} - [\chi_n]_{ikj} \right)\right|,\,\,\forall i,j = 1,\ldots,n.
\end{equation} This quantity attains zero if no $\textit{SpAMM}$ condition has been met when computing a given element, i.e. the information is preserved, and attains its maximum when all information is lost when performing $\textit{SpAMM}$, i.e. $[\chi_n]_{ikj} = 0,\,\,k = 1,\ldots,n$. The worst-case scenario is when every element in the error matrix attains its maximum, i.e. $\bar{C}_n = 0$ and thus $E_n = C_n.$

Let us consider the matrix $C_n = A_n B_n$. We have
\begin{equation} \label{c_spamm_element_abs_value}
|[C_n]_{i,j}| = \left| \sum\limits_{k=1}^{n} [A_n]_{i,k}[B_n]_{k,j} \right| \leq \sum\limits_{k=1}^{n}|[A_n]_{i,k}||[B_n]_{k,j}|. 
\end{equation}

Since both $A_n$ and $B_n$ have exponential decay with respect to a common distance function $d_n(i,j),$ by Theorem 4 in \citep{Rubensson2018localized} for any $\varepsilon > 0$ both matrices have at most $\kappa$ significant elements (i.e. greater than $\varepsilon$ in magnitude) in every row or column, where $\kappa$ is a constant independent of $n$. Let us use $\varepsilon = \tau,$ where $\tau$ is a $\textit{SpAMM}$ parameter. Note that $|[A_n]_{i,k}| |[B_n]_{k,j}| < \tau\,\,\forall i,k,j=1,\ldots,n$ since we are at the worst case scenario.

When estimating the absolute value in \eqref{c_spamm_element_abs_value}, three possible types of summands are to be taken into account:

\begin{enumerate}
\itemsep0.5em


\item  $|[A_n]_{i,k}| \geq \tau,\,\, |[B_n]_{k,j}| < \tau \Rightarrow 0 \leq |[A_n]_{i,k}| |[B_n]_{k,j}| < \tau;$

\item $|[A_n]_{i,k}| < \tau,\,\, |[B_n]_{k,j}| \geq \tau \Rightarrow 0 \leq |[A_n]_{i,k}| |[B_n]_{k,j}| < \tau;$

\item Both $|[A_n]_{i,k}| < \tau$ and $|[B_n]_{k,j}| < \tau \Rightarrow 0 \leq |[A_n]_{i,k}| |[B_n]_{k,j}| < \tau^2.$
\end{enumerate} There are not more than $2\kappa$ summands of type 1 and 2 for any element due to Theorem 4 from \citep{Rubensson2018localized}. The rest summands are of type 3. We split the index set $I_n = \{1,\ldots,n\}$ to two sets, ${I_n}_2 = \{k: |[A_n]_{i,k}| |[B_n]_{i,k}| ~\text{are of type 3} \}$ and ${I_n}_1 = I_n \setminus {I_n}_2.$ The further proof is technical and very similar to the proof of Theorem \ref{theorem_truncmul_new}, therefore it is omitted. 
\end{proof}

\begin{proof}[Proof of Theorem \ref{theorem_tuncated_spamm_new}]
Let $C_n$ be the true product of $A_n$ and $B_n$ and $\hat{C}_n$ be the result of the $\textit{SpAMM}$ algorithm applied on matrices $A_n$ and $B_n$ truncated with the same $\tau > 0$ as used by $\textit{SpAMM}$. Then, the error matrix $E_n = C_n - \hat{C}_n$ satisfies
\begin{equation} \label{eq:split_error}
\begin{split}
\left|[E_n]_{i,j}\right| & = \left| [C_n]_{i,j} - [\hat{C}_n]_{i,j} \right| = \left| [C_n]_{i,j} - [\tilde{C}_n]_{i,j} + [\tilde{C}_n]_{i,j} - [\hat{C}_n]_{i,j}\right| \\
& \leq \left|  [C_n]_{i,j} - [\tilde{C}_n]_{i,j} \right| + \left| [\tilde{C}_n]_{i,j} - [\hat{C}_n]_{i,j} \right|,\,\,\forall i,j = 1,\ldots,n .
\end{split}
\end{equation}

The first expression in \eqref{eq:split_error} is the element-wise error introduced by truncation of matrices before multiplication, and its bound is derived in Theorem \ref{theorem_truncmul_new}. The second expression in \eqref{eq:split_error} is the error introduced by the $\textit{SpAMM}$ algorithm assuming that the matrices have been already truncated, and its bound is derived in Theorem \ref{theorem_spamm_new}. Combination of those results gives us 
\begin{equation} \nonumber
\left|[E_n]_{i,j}\right| = O(\tau).
\end{equation} The further proof is technical and very similar to the proofs of Theorem \ref{theorem_truncmul_new} and Theorem \ref{theorem_spamm_new}, therefore it is omitted. 
\end{proof}

\bibliography{references}

\end{document}